\documentclass[11pt,leqno]{article}
\usepackage[margin=1in]{geometry} 
\usepackage{amssymb,amsfonts,amsmath,bbm,mathrsfs,stmaryrd,mathtools}
\usepackage{xcolor}
\usepackage{url}

\usepackage{graphicx}

\usepackage{accents}

\usepackage{extarrows}

\usepackage[shortlabels]{enumitem}
\usepackage{tensor}

\usepackage{xr}
\usepackage[T1]{fontenc}
\usepackage[utf8]{inputenc}
\usepackage[colorlinks,
linkcolor=black!75!red,
citecolor=blue,
pdftitle={},
pdfproducer={pdfLaTeX},
pdfpagemode=None,
bookmarksopen=true,
bookmarksnumbered=true,
backref=page]{hyperref}

\usepackage{tikz}
\usetikzlibrary{arrows,calc,decorations.pathreplacing,decorations.markings,decorations.shapes,intersections,shapes.geometric,through,fit,shapes.symbols,positioning,decorations.pathmorphing}

\makeatletter
\newlength\zig@L
\newlength\zig@La
\newlength\zig@Lb

\newcommand{\xzigrightarrow}[2][]{%
  \mathrel{%
    \settowidth{\zig@La}{$\scriptstyle #2$}%
    \settowidth{\zig@Lb}{$\scriptstyle #1$}%
    \zig@L=\zig@La\relax
    \ifdim\zig@Lb>\zig@L \zig@L=\zig@Lb\fi
    \advance\zig@L by 2.2em\relax
    \tikz[baseline=-0.65ex]{%
      \draw[->,
            line cap=round,
            decorate,
            decoration={zigzag,segment length=4pt,amplitude=1.1pt}]%
        (0,0) -- (\zig@L,0)
        node[midway,above=2pt] {$\scriptstyle #2$}%
        \if\relax\detokenize{#1}\relax\else
          node[midway,below=2pt] {$\scriptstyle #1$}%
        \fi
      ;
    }%
  }%
}
\makeatother

\makeatletter
\newcommand{\squigjoin}{1mu} % tune this: 0.5mu, 1mu, 1.5mu, ...

\def\sqleft@{\sim}                    % no overlap here
\def\sqmid@{\sim\mkern-\squigjoin}    % overlap only between repeated mids

\def\rightsquigarrowfill@{%
  \arrowfill@{\sqleft@}{\sqmid@}{\mkern-4mu\succ}%
}

\newcommand{\xrightsquigarrow}[2][]{%
  \ext@arrow 0359\rightsquigarrowfill@{#1}{#2}%
}
\makeatother

\makeatletter
\newcommand*\circled[1]{\tikz[baseline=(char.base)]{
    \node[shape=circle, draw, inner sep=0pt, 
    minimum height={\f@size},] (char) {\vphantom{WAH1g}#1};}}
\makeatother

\makeatletter
\DeclareRobustCommand\widecheck[1]{{\mathpalette\@widecheck{#1}}}
\def\@widecheck#1#2{%
    \setbox\z@\hbox{\m@th$#1#2$}%
    \setbox\tw@\hbox{\m@th$#1%
       \widehat{%
          \vrule\@width\z@\@height\ht\z@
          \vrule\@height\z@\@width\wd\z@}$}%
    \dp\tw@-\ht\z@
    \@tempdima\ht\z@ \advance\@tempdima2\ht\tw@ \divide\@tempdima\thr@@
    \setbox\tw@\hbox{%
       \raise\@tempdima\hbox{\scalebox{1}[-1]{\lower\@tempdima\box
\tw@}}}%
    {\ooalign{\box\tw@ \cr \box\z@}}}
\makeatother

\usepackage{braket}

\usepackage[amsmath,thmmarks,hyperref]{ntheorem}
\usepackage{cleveref}

\newcommand\nthalias[1]{\AddToHook{env/#1/begin}{\crefalias{lemma}{#1}}}

\nthalias{assumption}
\nthalias{conjecture}
\nthalias{construction}
\nthalias{convention}
\nthalias{corollaryN}
\nthalias{corollary}
\nthalias{definition}
\nthalias{examples}
\nthalias{example}
\nthalias{lemma}
\nthalias{notation}
\nthalias{propositionN}
\nthalias{proposition}
\nthalias{question}
\nthalias{recollection}
\nthalias{remarks}
\nthalias{remark}
\nthalias{sketch}
\nthalias{theoremN}
\nthalias{theorem}

\creflabelformat{enumi}{#2#1#3}

\crefname{section}{Section}{Sections}
\crefformat{section}{#2Section~#1#3} 
\Crefformat{section}{#2Section~#1#3} 

\crefname{subsection}{\S}{\S\S}
\AtBeginDocument{%
  \crefformat{subsection}{#2\S#1#3}%
  \Crefformat{subsection}{#2\S#1#3}% 
}

\crefname{subsubsection}{\S}{\S\S}
\AtBeginDocument{%
  \crefformat{subsubsection}{#2\S#1#3}%
  \Crefformat{subsubsection}{#2\S#1#3}% 
}

\theoremstyle{plain}

\newtheorem{lemma}{Lemma}[section]
\newtheorem{proposition}[lemma]{Proposition}
\newtheorem{corollary}[lemma]{Corollary}
\newtheorem{theorem}[lemma]{Theorem}

\theoremstyle{plain}
\theoremnumbering{Alph}
\newtheorem{theoremN}{Theorem}

\theoremstyle{plain}
\theorembodyfont{\upshape}
\theoremsymbol{\ensuremath{\blacklozenge}}

\newtheorem{definition}[lemma]{Definition}
\newtheorem{example}[lemma]{Example}

\newtheorem{remark}[lemma]{Remark}
\newtheorem{remarks}[lemma]{Remarks}

\crefname{definition}{definition}{definitions}
\crefformat{definition}{#2definition~#1#3} 
\Crefformat{definition}{#2Definition~#1#3} 

\crefname{ex}{example}{examples}
\crefformat{example}{#2example~#1#3} 
\Crefformat{example}{#2Example~#1#3} 

\crefname{exs}{example}{examples}
\crefformat{examples}{#2example~#1#3} 
\Crefformat{examples}{#2Example~#1#3} 

\crefname{remark}{remark}{remarks}
\crefformat{remark}{#2remark~#1#3} 
\Crefformat{remark}{#2Remark~#1#3} 

\crefname{remarks}{remark}{remarks}
\crefformat{remarks}{#2remark~#1#3} 
\Crefformat{remarks}{#2Remark~#1#3} 

\crefname{convention}{convention}{conventions}
\crefformat{convention}{#2convention~#1#3} 
\Crefformat{convention}{#2Convention~#1#3} 

\crefname{notation}{notation}{notations}
\crefformat{notation}{#2notation~#1#3} 
\Crefformat{notation}{#2Notation~#1#3} 

\crefname{table}{table}{tables}
\crefformat{table}{#2table~#1#3} 
\Crefformat{table}{#2Table~#1#3}

\crefname{lemma}{lemma}{lemmas}
\crefformat{lemma}{#2lemma~#1#3} 
\Crefformat{lemma}{#2Lemma~#1#3} 

\crefname{proposition}{proposition}{propositions}
\crefformat{proposition}{#2proposition~#1#3} 
\Crefformat{proposition}{#2Proposition~#1#3} 

\crefname{propositionN}{proposition}{propositions}
\crefformat{propositionN}{#2proposition~#1#3} 
\Crefformat{propositionN}{#2Proposition~#1#3} 

\crefname{corollary}{corollary}{corollaries}
\crefformat{corollary}{#2corollary~#1#3} 
\Crefformat{corollary}{#2Corollary~#1#3} 

\crefname{corollaryN}{corollary}{corollaries}
\crefformat{corollaryN}{#2corollary~#1#3} 
\Crefformat{corollaryN}{#2Corollary~#1#3} 

\crefname{theorem}{theorem}{theorems}
\crefformat{theorem}{#2theorem~#1#3} 
\Crefformat{theorem}{#2Theorem~#1#3} 

\crefname{theoremN}{theorem}{theorems}
\crefformat{theoremN}{#2theorem~#1#3} 
\Crefformat{theoremN}{#2Theorem~#1#3} 

\crefname{enumi}{}{}
\crefformat{enumi}{#2#1#3}
\Crefformat{enumi}{#2#1#3}

\crefname{assumption}{assumption}{Assumptions}
\crefformat{assumption}{#2assumption~#1#3} 
\Crefformat{assumption}{#2Assumption~#1#3} 

\crefname{construction}{construction}{Constructions}
\crefformat{construction}{#2construction~#1#3} 
\Crefformat{construction}{#2Construction~#1#3} 

\crefname{sketch}{sketch}{Sketches}
\crefformat{sketch}{#2sketch~#1#3} 
\Crefformat{sketch}{#2Sketch~#1#3} 

\crefname{recollection}{recollection}{Recollections}
\crefformat{recollection}{#2recollection~#1#3} 
\Crefformat{recollection}{#2Recollection~#1#3} 

\crefname{question}{question}{Questions}
\crefformat{question}{#2question~#1#3} 
\Crefformat{question}{#2Question~#1#3} 

\crefname{equation}{}{}
\crefformat{equation}{(#2#1#3)} 
\Crefformat{equation}{(#2#1#3)}

\numberwithin{equation}{section}

\theoremstyle{nonumberplain}
\theoremsymbol{\ensuremath{\blacksquare}}

\newtheorem{proof}{Proof}
\newcommand\pf[1]{\newtheorem{#1}{Proof of \Cref{#1}}}

\newcommand\bC{{\mathbb C}}
\newcommand\bD{{\mathbb D}}

\newcommand\bG{{\mathbb G}}

\newcommand\bK{{\mathbb K}}

\newcommand\bR{{\mathbb R}}
\newcommand\bS{{\mathbb S}}

\newcommand\bU{{\mathbb U}}

\newcommand\bZ{{\mathbb Z}}

\newcommand\cA{{\mathcal A}}
\newcommand\cB{{\mathcal B}}
\newcommand\cC{{\mathcal C}}

\newcommand\cE{{\mathcal E}}
\newcommand\cF{{\mathcal F}}
\newcommand\cG{{\mathcal G}}
\newcommand\cH{{\mathcal H}}

\newcommand\cL{{\mathcal L}}

\newcommand\cO{{\mathcal O}}

\newcommand\cU{{\mathcal U}}
\newcommand\cV{{\mathcal V}}

\newcommand\ol{\overline}

\DeclareMathOperator{\id}{id}

\DeclareMathOperator{\End}{\mathrm{End}}

\DeclareMathOperator{\spn}{\mathrm{spn}}

\newcommand{\comment}[1]{}

\newcommand{\xrightarrowdbl}[2][]{%
  \xrightarrow[#1]{#2}\mathrel{\mkern-14mu}\rightarrow
}

\title{Obstructed subhomogeneous-bundle extensions and embeddings}
\author{Alexandru Chirvasitu}

\begin{document}

\date{}

\newcommand{\Addresses}{{% additional braces for segregating \footnotesize
  \bigskip
  \footnotesize

  \textsc{Department of Mathematics, University at Buffalo}
  \par\nopagebreak
  \textsc{Buffalo, NY 14260-2900, USA}  
  \par\nopagebreak
  \textit{E-mail address}: \texttt{achirvas@buffalo.edu}

}}

\maketitle

\begin{abstract}
  We address a number of problems concerning the (im)possibility of either extending locally trivial subbundles of possibly singular Banach/$C^*$ bundles globally, embedding subhomogeneous bundles into homogeneous ones, or recovering locally trivial compact-Lie-group-equivariant Banach or $C^*$ bundles as pullbacks along equivariant maps to compact spaces. The results include (1) the global extensibility of a locally trivial Banach/Hilbert/Banach-algebra/$C^*$ subbundle from a closed subspace of a paracompact space given appropriate homotopy constraints; (2) the homogeneous embeddability of equivariant subhomogeneous Banach/Hilbert bundles locally trivial along the singular locus under the same homotopy constraints, and (3) the characterization of finite-type equivariant locally trivial subhomogeneous $C^*$ bundles on normal spaces as precisely those (a) locally trivial as plain vector bundles, or (b) pulled back from the universal equivariant compactification or (c) pulled back from an equivariant map into a smooth manifold. The latter extends results of Phillips concerning non-equivariant matrix-algebra bundles restricted along the Stone\v{C}ech compactification. 
\end{abstract}

\noindent \emph{Key words:
  $C^*$ bundle;
  Banach bundle;
  clutching;
  equivariant bundle;
  equivariant compactification;
  finite type;
  homotopy group;
  paracompact
}

\vspace{.5cm}

\noindent{MSC 2020: 46M20; 55R10; 55R15; 54D20; 55R91; 46L30; 46H25; 46L85
  % 46M20   	Methods of algebraic topology in functional analysis (cohomology, sheaf and bundle theory, etc.)
  % 55R10   	Fiber bundles in algebraic topology
  % 55R15   	Classification of fiber spaces or bundles in algebraic topology
  % 54D20   	Noncompact covering properties (paracompact, Lindelöf, etc.)
  % 55R91   	Equivariant fiber spaces and bundles in algebraic topology
  % 46L30   	States of selfadjoint operator algebras
  % 46H25   	Normed modules and Banach modules, topological modules (if not placed in 13-XX or 16-XX)
  % 46L85   	Noncommutative topology  
}

%\tableofcontents

%%%%%%%%%%%%%%%%%%%%%%%%%%%%%%%%
%%%%%%%%%%%%%%%%%%%%%%%%%%%%%%%%
\section*{Introduction}

The questions forming the focus of the paper are offshoots of the theory of \emph{non-commutative} (or \emph{quantum}) \emph {branched covers}, introduced in \cite[Definition 1.2]{pt_brnch} by analogy to the purely classical version: continuous open surjections of compact $T_2$ spaces with global upper bounds on fiber sizes. Quantum branching is also studied in \cite{bg_cx-exp} where, due to the ``half-classical'' nature of the work, the link to the theory of (singular) \emph{$C^*$-algebra bundles} \cite[p.9]{dg_ban-bdl} is in evidence: for purposes as encompassing as the present paper need accommodate, \cite[\S 1]{bg_cx-exp}'s quantum branched covers are subhomogeneous $C^*$ bundles $\cA\xrightarrowdbl{}X$ over compact Hausdorff spaces which admit \emph{finite-index expectations} \cite[Definition 2]{fk_fin-ind} $\Gamma(\cA)\xrightarrowdbl{}C(X)$. 

An examination of the cited sources and others touching on the topic (\cite{zbMATH05172034} for instance) and their proof techniques reveals intimate connections between the attendant problems (e.g. existence of the requisite finite-index expectations under various sets of assumptions) and matters involving
\begin{itemize}[wide]
\item the embeddability of possibly singular subhomogeneous $C^*$/Banach bundles into locally trivial counterparts;
\item variant questions seeking necessary/sufficient conditions ensuring the embeddability of a locally trivial bundle into a globally trivial one;
\item and the possibility of extending locally trivial $C^*$/Banach subbundles from exceptional loci of the base space either locally or globally. 
\end{itemize}
It is with these types of questions that the paper is concerned: the extent to which such extensions/embeddings exist, the analysis of possible obstructions when they do not, and the utility of packaging those obstructions as invariants attached to possibly singular Banach or $C^*$ bundles. The objects involved will moreover frequently be \emph{$\bU$-equivariant} \cite[\S I.8]{td_transf-gp} for compact Lie groups $\bU$ operating compatibly on everything in sight (\emph{bundles$_{\bU}$} is the term employed throughout the paper).

To motivate a first sampling of the results, recall \cite[Proposition 2.9]{zbMATH05172034}'s characterization of \emph{finite-type} matrix-algebra bundles (i.e. those trivializable over the members of a finite cover) on normal spaces as precisely those induced from some compactification of the base space. The problem fits the broad context outlined above, and can be regarded either as a matter of bundle extensibility across a compactification or as an examination of obstructions to various flavors of trivial embeddability. \Cref{th:ft.emb.ft,th:ft.ext.cpct} extend the result just cited by superimposing the equivariance layer and addressing arbitrary subhomogeneous locally trivial $C^*$ bundles; a compressed summary reads as follows:

\begin{theoremN}\label{thn:ft.intro}
  Let $\bU$ be a compact Lie group. The following conditions on a locally trivial subhomogeneous $C^*$ bundle$_{\bU}$ $\cA\xrightarrowdbl{}X$ over a normal $\bU$-space are equivalent. 

  \begin{enumerate}[(a),wide]
  \item $\cA$ is finite-type.

  \item $\cA$ is finite-type as a vector bundle. 

  \item $\cA$ extends across the \emph{universal $\bU$-equivariant compactification} \cite[\S 1]{megr_max-equiv-cpct} $X\lhook\joinrel\to \beta_{\bU}X$.

  \item $\cA$ is a pullback through an equivariant map to a smooth compact $\bU$-manifold. 
  \end{enumerate}
\end{theoremN}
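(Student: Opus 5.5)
The plan is to prove the cycle (a)$\Rightarrow$(b)$\Rightarrow$(d)$\Rightarrow$(c)$\Rightarrow$(a). The implication (a)$\Rightarrow$(b) is immediate: a $C^*$-trivialization over a member of a finite cover is in particular a vector-bundle trivialization. Here "finite-type" for a bundle$_{\bU}$ means that $X$ is covered by finitely many $\bU$-invariant open sets over which $\cA$ is equivariantly induced from a trivial bundle over an orbit type $\bU\times_H(\text{slice})$. Subhomogeneity, together with the finiteness of the cover, guarantees that only finitely many fiber dimensions occur.

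For (b)$\Rightarrow$(d), we first realize $\cA$ as an equivariant subbundle of a trivial bundle $X\times V$, where $V$ is a finite-dimensional $\bU$-representation. To do this we take a finite invariant cover $\{U_i\}$ trivializing $\cA$ as a vector bundle. Normality lets us choose an invariant partition of unity subordinate to it, after averaging over $\bU$ by Haar measure. We then glue the local frames into a fiberwise-injective map $\cA\to X\times\bigoplus_i\bC^{n_i}$. Averaging over $\bU$, and using Peter–Weyl to replace the target by a finite-dimensional representation, makes this map equivariant.

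This presents the vector bundle as the pullback of a tautological bundle along the classifying map $X\to\coprod_n\mathrm{Gr}_n(V)$. To capture the $C^*$ structure as well, we enlarge the target. We record the multiplication and involution on each fiber $E_x\subseteq V$ as a point of the space $M$ of $C^*$-algebra structures on $n$-dimensional subspaces of $V$. Since only finitely many fiber types occur, $M$ is a finite disjoint union of homogeneous spaces of the form $\mathrm{GL}(V)/\mathrm{Stab}$. Its relevant components are compact, being orbits of $U(V)$ after polar-decomposing so as to make the structure unitarily standard. These components are therefore compact smooth $\bU$-manifolds.

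The tautological $C^*$ bundle over $M$ pulls back to $\cA$ along the resulting equivariant map $X\to M$. The main obstacle is the choice of embedding. The embedding must be arranged so that the inner product on $V$ is compatible with the $C^*$ norm, for instance by using a faithful tracial state fiberwise. Otherwise the structure space fails to be compact and the map fails to land in a compact orbit. This is where local triviality as a $C^*$ bundle, and not merely as a vector bundle, must be used. Continuity of the classifying map then follows from local triviality.

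For (d)$\Rightarrow$(c), suppose $\cA=f^*\cB$ with $f:X\to M$ equivariant and $M$ compact. The universal property of $\beta_{\bU}X$ recalled from \cite[\S 1]{megr_max-equiv-cpct} extends $f$ to $\beta_{\bU}X\to M$. Pulling $\cB$ back along this extension gives the required extension of $\cA$.

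For (c)$\Rightarrow$(a), compactness of $\beta_{\bU}X$ together with local triviality of the extended bundle yields finitely many invariant trivializing opens, using slices. Restricting these to $X$ shows that $\cA$ is finite-type.
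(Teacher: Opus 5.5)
Your cycle is essentially the paper's own argument, namely the route of \Cref{th:ft.ext.cpct}, whose proof of finite type $\Rightarrow$ pullback only uses finite type as a vector bundle$_{\bU}$: a Hilbert-bundle$_{\bU}$ embedding $\cA\hookrightarrow X\times V$ as in \cite[Proposition I.9.7]{td_transf-gp}, a classifying map into a compact $\bU$-manifold of $C^*$-structures (the paper phrases this via the GNS embedding $\cA\hookrightarrow\cE nd~\cA$ and the Grassmannian $\bG_{C^*}(\End V)$ of $C^*$-subalgebras), then the universal property of $\beta_{\bU}X$ and compactness. The one point to pin down is the trace you leave as ``for instance'': it must be canonical (the paper uses the Blanchard--Gogi\'c expectation; $\tau_x(a)=\mathrm{Tr}(L_a)/\dim\cA_x$ works), since such a trace is continuous by local triviality and preserved by all $*$-isomorphisms, hence $\bU$-invariant, and it makes the trace-compatible structures of a given isomorphism type a single compact $U(V)$-orbit, whereas an arbitrary continuous family of faithful traces can have its weights degenerate along non-compact directions of $X$ and destroy compactness.
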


As to extension from singular loci, the rigidity characteristic of local triviality already affords \emph{local} extensibility of locally trivial subhomogeneous subbundles around closed subsets of paracompact spaces: \cite[Theorems 0.2 and 0.3]{MR5060733}. Further homotopy constraints ensure that goes through globally:

\begin{theorem}\label{th:ext.loc.triv.hmtp.eq}
  Let $\bU$ be a compact Lie group, and
  \begin{itemize}[wide]
  \item $F\subseteq X$ a $\bU$-equivariant closed embedding of paracompact $\bU$-spaces with paracompact $X\setminus F$;

  \item such that $U$ and $X$ are $\bU$-equivariantly homotopic \emph{relative to $F$} \cite[p.3]{hatch_at} for arbitrarily small $\bU$-invariant neighborhoods $U\supseteq F$ with paracompact $U\setminus F$.
  \end{itemize}

  \begin{enumerate}[(1),wide]
  \item\label{item:th:ext.loc.triv.hmtp.eq:ban} If $\cE\xrightarrowdbl{\pi} X$ is a continuous Banach bundle$_{\bU}$, locally trivial subhomogeneous on $X\setminus F$, a locally trivial subhomogeneous subbundle$_{\bU}$ $\cF\le \cE|_{F}$ extends globally and $\bU$-equivariantly to
    \begin{equation*}
      \left(\ol{\cF}\xrightarrowdbl{\quad}X\right)
      \le
      \left(\cE\xrightarrowdbl{\quad}X\right)
      ,\quad
      \ol{\cF}|_{F}=\cF. 
    \end{equation*}

  \item\label{item:th:ext.loc.triv.hmtp.eq:hilb} The analogue of \Cref{item:th:ext.loc.triv.hmtp.eq:ban} holds for Hilbert-bundles$_{\bU}$ $\cF\le \cE|_F$.

  \item\label{item:th:ext.loc.triv.hmtp.eq:balg} If $\cA\xrightarrowdbl{\pi}X$ is a unital Banach-algebra bundle$_{\bU}$, locally trivial, subhomogeneous semisimple on $X\setminus F$ with locally trivial subhomogeneous semisimple $\cB\le \cA|_F$ there is a global equivariant extension
    \begin{equation*}
      \left(\ol{\cB}\xrightarrowdbl{\quad}X\right)
      \le
      \left(\cA\xrightarrowdbl{\quad}X\right)
      ,\quad
      \ol{\cB}|_{F}=\cB. 
    \end{equation*}

  \item\label{item:th:ext.loc.triv.hmtp.eq:cast} The analogue of \Cref{item:th:ext.loc.triv.hmtp.eq:balg} holds for $C^*$ bundles$_{\bU}$. 
  \end{enumerate}
\end{theorem}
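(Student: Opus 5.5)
The plan is to reduce global extensibility to the local extensibility already available from \cite[Theorems 0.2 and 0.3]{MR5060733}, and then use the relative equivariant homotopy between $U$ and $X$ to push the local extension out to all of $X$. Throughout I treat the four cases \Cref{item:th:ext.loc.triv.hmtp.eq:ban}--\Cref{item:th:ext.loc.triv.hmtp.eq:cast} uniformly: a locally trivial subhomogeneous subobject of the relevant type (Banach, Hilbert, semisimple Banach-algebra, $C^*$) inside the fibers of a locally trivial ambient bundle is a section of an associated locally trivial bundle of ``admissible subobjects'' (closed complemented subspaces of fixed dimension, orthogonal projections, semisimple subalgebras, $C^*$-subalgebras). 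This bundle is a fiber bundle with a $\bU$-action over $X\setminus F$, and the cited local results amount to its sections extending off $F$.

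\textbf{Step 1 (local extension).} I would apply \cite[Theorems 0.2 and 0.3]{MR5060733} to obtain a $\bU$-invariant open neighborhood $V\supseteq F$ and an equivariant locally trivial subbundle $\cF_V\le \cE|_V$ with $\cF_V|_F=\cF$. If the cited results are not stated equivariantly, I would first extend non-equivariantly and then average over $\bU$ using Haar measure. This averaging works on projections in the Hilbert case, and on idempotents/conditional expectations near a given one, followed by a holomorphic-functional-calculus correction, in the Banach and algebra cases; it relies on the rigidity of nearby subobjects, which are conjugate by invertibles close to $1$. The hypothesis then supplies a smaller invariant neighborhood $U\subseteq V$, with $U\setminus F$ paracompact, together with an equivariant homotopy $H:X\times[0,1]\to X$ relative to $F$ from $\id_X$ to a map $r:X\to U$, or more generally data making $U\hookrightarrow X$ an equivariant homotopy equivalence rel $F$.

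\textbf{Step 2 (pull back along the retraction).} The naive candidate $r^*\cF_V$ is a subbundle of $r^*\cE$, not of $\cE$. On $X\setminus F$, however, $\cE$ is locally trivial subhomogeneous, so I would use the homotopy $H$ together with the equivariant homotopy invariance of locally trivial bundles over paracompact spaces to identify $H_1^*\cE\cong H_0^*\cE=\cE$. Concretely, I would build an equivariant transport isomorphism $\Phi:r^*(\cE|_U)\cong\cE$ that is the identity over $F$, and set $\ol{\cF}:=\Phi(r^*\cF_V)$. Because $H$ is stationary on $F$, this gives $\ol{\cF}|_F=\cF$. Local triviality of $\ol{\cF}$ at points of $F$ comes from Step 1, since there it agrees with $\cF_V$ after shrinking. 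Local triviality elsewhere comes from pulling back. Since $\Phi$ is a fiberwise isomorphism of Banach, Hilbert, algebra, or $C^*$ structure, the type of subobject is preserved, and this covers all four cases.

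\textbf{Main obstacle.} The hard step is constructing $\Phi$ across $F$, where $\cE$ is only a continuous, possibly singular, bundle, so homotopy invariance is not available there. I would handle this by gluing. On $U$ I would take the identity-type identification, using that $r$ can be arranged to equal the identity near $F$ by composing with the flow of $H$. On $X\setminus F$, which is paracompact and carries a locally trivial structure, I would use equivariant homotopy invariance to get the transport. The two would be patched with an invariant partition of unity subordinate to $\{U, X\setminus \ol{W}\}$ for a smaller $W\supseteq F$, employing the ``arbitrarily small $U$'' clause so that the transport is the identity on a neighborhood of $F$. Patching subobjects rather than isomorphisms requires care, since convex combinations of subobjects are not subobjects. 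I would therefore patch the isomorphisms near the identity and use the fact that invertibles close to $1$, or automorphisms in the algebra cases via exponentials of derivations, form a convex-friendly neighborhood.
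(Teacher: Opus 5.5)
Your overall strategy is the paper's. You extend locally via \cite[Theorems 0.2 and 0.3]{MR5060733}, then use homotopy invariance over the paracompact $X\setminus F$ to spread the extension from $U\setminus F$. The paper phrases this second step as homotopy invariance of the reduction of structure group to $\bU(d')\times\bU(d'')\le\bU(d)$ via equivariant classifying spaces, which is the principal-bundle form of your $\Phi(r^*\cF_V)$.

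\textbf{The gluing at $F$.} The paper says nothing about matching the result with the local extension near $F$. You do address this, but your mechanism fails. You patch the identity near $F$, via an invariant partition of unity, with an isomorphism $\Phi_2\colon r^*\cE\cong\cE$ from plain homotopy invariance on $X\setminus F$. That gives an isomorphism only if $\Phi_2$ is already close to the identity on the overlap, and nothing forces this. Where $r=\id$, $\Phi_2$ is an arbitrary automorphism of $\cE$: it could be $-1$ on part of it, and $\tfrac12(1+(-1))=0$. Shrinking $U$ does not change that.

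What you need instead is a \emph{relative} homotopy theorem, after which no patching is required:
\begin{enumerate}[(1)]
\item The set $W':=\{x\in X:\ H(\{x\}\times I)\subseteq U\}$ is open by the tube lemma, $\bU$-invariant, and contains $F$.
\item Take an invariant Urysohn function $\phi$ vanishing on an invariant neighborhood $\ol{W}\subseteq W'$ of $F$ and equal to $1$ off $W'$. Replace $H$ by $\tilde H(x,s):=H(x,s\phi(x))$. This is an equivariant homotopy from $\id_X$ to some $\tilde r\colon X\to U$ that is stationary on $W$; it is the precise form of your ``arrange $r=\id$ near $F$''.
\item Now $\tilde H^*\cE$ is canonically a product over $(W\setminus F)\times I$. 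Run the step-by-step proof of the bundle homotopy theorem over $(X\setminus F)\times I$, using product-compatible trivializations over $W$ and arbitrary ones off $\ol{W_0}$, for a smaller invariant $W_0\supseteq F$ with $\ol{W_0}\subseteq W$. This yields $\Phi\colon\tilde r^*\cE\cong\cE$ over $X\setminus F$ that is the identity over $W_0\setminus F$. Equivalently, lift $\tilde H$ into the frame or Grassmann bundle with a regular lifting function.
\item Set $\ol{\cF}:=\cF_V$ on $W_0$ and $\ol{\cF}:=\Phi(\tilde r^*\cF_V)$ on $X\setminus F$. This is a well-defined locally trivial extension. Working with the associated principal $\mathrm{Aut}$-bundles makes $\Phi$ structure-preserving in all four cases.
\end{enumerate}

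\textbf{A tacit assumption.} Step 2 also needs the deformation to keep $X\setminus F$ inside $X\setminus F$, so that $r(X\setminus F)\subseteq U\setminus F$. At a point $x\notin F$ with $r(x)\in F$, the fibers $\cE_{r(x)}$ and $\cE_x$ may have different dimensions. Then no fiberwise isomorphism $\Phi_x$ exists, and $H^*\cE$ is not locally trivial over $(X\setminus F)\times I$. A homotopy relative to $F$ need not have this property; for example, $H(x,t)=(1-t)x$ on $X=[0,1]$ with $F=\{0\}$. The paper's proof makes the same assumption without comment (``so that $X\setminus F$ and $U\setminus F$ are also equivariantly homotopic''). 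You should state it explicitly as the form of the hypothesis you use.
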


This in turn has consequences on singular-bundle embeddability into locally trivial bundles. 

\begin{theorem}\label{th:can.emb.ban.hilb}
  Let $\bU$ be a compact Lie group, and
  \begin{itemize}[wide]
  \item $F\subseteq X$ a $\bU$-equivariant closed embedding of paracompact $\bU$-spaces with paracompact $X\setminus F$;

  \item such that $U$ and $X$ are $\bU$-equivariantly homotopic \emph{relative to $F$} \cite[p.3]{hatch_at} for arbitrarily small $\bU$-invariant neighborhoods $U\supseteq F$ with paracompact $U\setminus F$.
  \end{itemize}
  \begin{enumerate}[(1),wide]
  \item A continuous subhomogeneous Banach bundle$_{\bU}$ $\cE\xrightarrowdbl{}X$ locally trivial over $F$ and $X\setminus F$ and finite-type over the latter embeds into a trivial homogeneous Banach bundle. 

  \item The Hilbert-bundle analogue also holds. 
  \end{enumerate}
\end{theorem}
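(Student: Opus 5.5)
Fix a continuous subhomogeneous Banach bundle$_{\bU}$ $\cE\xrightarrowdbl{}X$ that is locally trivial over $F$ and over $X\setminus F$, and finite-type over the latter. The plan is to embed $\cE$ into a trivial homogeneous bundle in two stages: first over $F$, then extend the embedding across $X$ using \Cref{th:ext.loc.triv.hmtp.eq}.

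\textbf{Embedding over $F$.} Since $\cE|_F$ is locally trivial and subhomogeneous, its fiber dimension is locally constant on $F$ and bounded. It splits as a finite disjoint union of equivariant pieces $\cE|_{F_d}$, one for each rank $d$, and each piece is a locally trivial finite-rank bundle$_{\bU}$. I would first check that each $\cE|_{F_d}$ is finite-type. Local triviality around $F$ together with the finite-type hypothesis on $X\setminus F$ should give this, possibly after intersecting with a small invariant neighborhood $U\supseteq F$. A finite-type equivariant vector bundle over a paracompact $\bU$-space embeds into a trivial bundle $F\times V$, where $V$ is a finite-dimensional $\bU$-representation. For this I would use the standard Segal/tom Dieck argument: finitely many trivializing invariant opens, an invariant partition of unity, and averaging over $\bU$ via Haar measure, so that fibers of $\cE$ become subspaces of $V$ complemented by equivariant projections. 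The same argument, run on $X\setminus F$ with its finite-type hypothesis, gives an embedding $\cE|_{X\setminus F}\le (X\setminus F)\times V'$. After enlarging, I take a common $V$ for both.

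\textbf{Extending across $X$.} Consider the trivial bundle $X\times V$ together with the direct sum $\cE\oplus (X\times V)$, or more precisely a continuous Banach bundle$_{\bU}$ built from $\cE$ and $X\times V$. The graph of the embedding over $F$ is a locally trivial subhomogeneous subbundle$_{\bU}$ $\cF\le (\cE\oplus (X\times V))|_F$. The ambient bundle is locally trivial subhomogeneous on $X\setminus F$, since $\cE$ is. So \Cref{th:ext.loc.triv.hmtp.eq}\Cref{item:th:ext.loc.triv.hmtp.eq:ban}, under exactly our homotopy hypotheses, extends $\cF$ to a global subbundle$_{\bU}$ $\ol{\cF}$. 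The aim is for $\ol{\cF}$ to project isomorphically onto $\cE$, so that it encodes an equivariant bundle map $\cE\to X\times V$ that is injective over $F$.

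Injectivity should hold over a neighborhood of $F$ by openness of the injective maps. Away from $F$ I would patch with the embedding over $X\setminus F$. Write $\iota_F$ for the extended map and $\iota_{X\setminus F}$ for the embedding over $X\setminus F$. Take an invariant Urysohn function $\phi$ that equals $1$ near $F$ and has support where $\iota_F$ is injective. The map $\phi\iota_F\oplus(1-\phi)\iota_{X\setminus F}$ into $X\times(V\oplus V)$ is then fiberwise injective everywhere. Using the paracompactness of $X\setminus F$, it has closed range of constant norm control and so is an embedding of Banach bundles. The Hilbert case runs identically, using \Cref{th:ext.loc.triv.hmtp.eq}\Cref{item:th:ext.loc.triv.hmtp.eq:hilb} and taking $V$ to be a unitary representation.

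\textbf{Main obstacle.} The hardest step is making the extension from \Cref{th:ext.loc.triv.hmtp.eq} actually yield a \emph{graph}, i.e. a subbundle transverse to $0\oplus(X\times V)$ that projects isomorphically onto $\cE$. I expect this to be automatic only near $F$ and not globally. That is why I would use the patching step rather than rely on the extended subbundle globally. If even the near-$F$ transversality fails, the fallback is to extend the map directly. Choose an equivariant homotopy retraction from a neighborhood $U$ onto $X$ relative to $F$, and transport the embedding over $F$ by local triviality of $\cE$ near $F$, as in \cite[Theorems 0.2 and 0.3]{MR5060733}.
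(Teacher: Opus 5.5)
Your proposal does not deal with the one phenomenon that makes the statement nontrivial: the jump in fiber dimension across $F$. Where there is no jump, $\cE$ is already locally trivial by \Cref{pr:loc.homog.loc.triv}. The subbundle $\ol{\cF}\le\cE\oplus(X\times V)$ supplied by \Cref{th:ext.loc.triv.hmtp.eq}\Cref{item:th:ext.loc.triv.hmtp.eq:ban} is locally trivial of the same rank as $\cE|_F$. By contrast, $\dim\cE_y$ is in general strictly larger at points $y\in X\setminus F$ arbitrarily close to $F$.

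So $\ol{\cF}$ fails to project onto $\cE$ at every such $y$, not merely far from $F$. Near $F$ it is the graph of a map defined only on the lower-rank subbundle $p(\ol{\cF})\le\cE$, where $p$ is the projection to $\cE$. There is therefore no $\iota_F$ defined on all of $\cE$. Nor is injectivity an open condition across a rank jump: over $[0,1]$ with $\cE_0=\bR$ and $\cE_t=\bR^2$ for $t>0$, no bundle map to $[0,1]\times\bR$ that is injective at $0$ is injective at any $t>0$.

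Hence the patching fails, because the extra directions are seen only through $(1-\phi)\iota_{X\setminus F}$. They are killed wherever $\phi=1$ off $F$. If you instead take $\phi<1$ off $F$, unit vectors in those directions over points tending to $F$ are sent to vectors tending to $0$, although they do not converge to $0$ in $\cE$. The only way out is to make $\iota_{X\setminus F}$ blow up on exactly those directions, which requires singling them out by a complement. Otherwise you get a continuous injection, not a Banach-bundle embedding. Your ``closed range with norm control'' claim has nothing behind it near $F$. The fallback has the same defect: $\cE$ is locally trivial only over $F$ and over $X\setminus F$ separately, not on a neighborhood of $F$.

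The missing idea is to put all the extra dimensions into a summand that vanishes on $F$. The paper applies \Cref{th:ext.loc.triv.hmtp.eq} to $\cE|_F\le\cE|_F$ itself, with no graph, obtaining a global locally trivial $\ol{\cF}\le\cE$ with $\ol{\cF}|_F=\cE|_F$; your $p(\ol{\cF})$ is a local version of it. A Hermitian structure on $\cE|_{X\setminus F}$ then splits $\cE=\ol{\cF}\oplus\cG$ with $\cG|_F=0$. The summand $\cG$ is embedded by \Cref{pr:2strata.triv.embed}, extending a complementing trivial bundle from $X\setminus F$ across $F$, and $\ol{\cF}$ is embedded separately. No Urysohn patching is needed.

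Separately, your claim that $\cE|_F$ is automatically of finite type is unfounded. Let $F=\bigsqcup_n\bR P^n$ carry the tautological line bundle $\gamma_n$ on each component, and let $X$ be $F$ with an arc attached at one point, the bundle extended trivially along the arc. All hypotheses hold. Yet $\gamma_n$ embeds into a trivial rank-$N$ bundle only if $N\ge n+1$, since a complement has total Stiefel--Whitney class $(1+a)^{-1}$, with $a\in H^1(\bR P^n;\bZ/2)$ the generator. So $\cE$ embeds into no trivial homogeneous bundle. The paper's own proof silently needs the same thing when it embeds $\ol{\cF}$ ``by local triviality''. Finite type over $F$ is thus really a missing hypothesis of the statement, not something either argument can derive.
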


In much the same spirit, \Cref{th:cn.ext} packages the non-extensibility of a Banach bundle
\begin{itemize}[wide]
\item rank-$r_0$ trivial on the \emph{punctured cone}
  \begin{equation*}
    \cC_{\times}X
    :=
    \cC X\setminus \left(0_X:=\pi^{-1}\left(X\times \{0\}\right)\right)
    ,\quad
    X\times [0,1]
    \xrightarrowdbl{\quad \pi\quad}
    \cC X:=X\times [0,1]/\left(X\times\{0\}\right)
  \end{equation*}
  on a paracompact space;

\item and rank-$r_1$ at the tip $0_X$ of the cone
\end{itemize}
as a homotopy-theoretic obstruction class. Non-extensibility for appropriate numerical data $0<r_1< r_0$ is then illustrated by \Cref{ex:not.emb.twisted.emb} via \Cref{cor:non.ext.spec.chi}.

% %

%%%%%%%%%%%%%%%%%%%%%%%%%%%%%%%%
%%%%%%%%%%%%%%%%%%%%%%%%%%%%%%%%
\section{Extensions and embeddings of (equivariant) bundles}\label{se:ext.emb}

The \emph{Banach bundles} featuring throughout are those of \cite[Definition II.13.4]{fd_bdl-1}, \cite[Definition 1.1]{dg_ban-bdl}, \cite[\S 10, Definition, p.100]{fell_ind-ban-bdl}, etc.:

\begin{itemize}[wide]
\item continuous open surjections $\cE\xrightarrow{\pi}X$ with Banach-space \emph{fibers} $(\cE_x,\|\cdot\|_x)$ for $\cE_x:=\pi^{-1}(x)$;
\item with addition and scaling continuous in the obvious sense;
\item for every $x\in X$ the sets
  \begin{equation*}
    \left\{v\in\pi^{-1}U\ :\ \|v\|_y<\varepsilon,\ y\in U\ni x\right\}
    ,\quad
    U=\accentset{\large\circ}{U}\ni x
    ,\quad
    \varepsilon>0
  \end{equation*}
  constitute a local neighborhood system around $0_x\in \cE_x$;

\item and the norm function $\cE\ni v\mapsto \|v\|_{\pi v}\in \bR$ is continuous the vast majority of times (for the \emph{continuous} or \emph{(F)} Banach bundles of \cite[p.8]{dg_ban-bdl}) but occasionally perhaps only upper semicontinuous (\emph{(H) Banach bundles}).
\end{itemize}
Hilbert, Banach-algebra or $C^*$ bundles are defined analogously \cite[p.9]{dg_ban-bdl}, fibers being equipped with the respective richer structures. More generally, we borrow the language of \cite[Definition 0.1]{MR5060733} in speaking of \emph{$\bU$-equivariant} Banach/Hilbert/Banach-algebra/$C^*$ bundles, or simply Banach/etc. \emph{ bundles$_{\bU}$} for a topological group (mostly compact Lie) operating in a structure-preserving fashion on everything of concern: $\cE$ as well as $X$, with $\pi$ and the relevant structure maps $\bU$-equivariant.

Apart from the Banach-flavored notion just recalled the phrase ``bundle'' applies below (along with its $\bU$-equivariant extensions) also to other familiar contexts: principal bundles$_{\bU}$ (as discussed in \cite[\S I.8]{td_alg-top} where the acting group $\bU$ is denoted by $\Gamma$), equivariant vector bundles, etc. The notion of equivariance employed is thus intermediate in strength between \cite{MR650393} (and the more recent \cite{MR3331607}) say, where $\bU$ acts trivially on the structure group of the bundle, and \cite{MR885537}, where it is packaged rather as an extension of $\bU$ by $\bG$ (which would specialize to the semidirect product $\bG\rtimes \bU$ in the present context). 

Base spaces of bundles will always be assumed at least \emph{Tychonoff} (i.e. \cite[Definition 14.8]{wil_top} satisfying the $T_{3\frac{1}{2}}$ separation axiom), and usually \emph{paracompact} \cite[Definition 20.6]{wil_top}. Banach/Hilbert/Banach-algebra/$C^*$ bundles are
\begin{itemize}[wide]
\item \emph{(sub)homogeneous} (e.g. in concert with \cite[Definitions 2.4 and 2.5]{bg_cx-exp}, \cite[\S 2]{gog_top-fg}) are those for which the fibers are mutually isomorphic finite-dimensional objects (or have dimensions bounded by a global finite constant respectively);
\item similarly, \emph{$n$-(sub)homogeneous} if the fiber dimension is precisely (respectively upper-bounded by) $n\in \bZ_{\ge 0}$;

\item and \emph{locally trivial} \cite[Remark II.13.9]{fd_bdl-1} if identifiable with $U\times E$ for an object $E$ of the appropriate type (Banach space, etc.) for open subsets $U\subseteq X$ covering $X$ (the phrase conforms to standard language in bundle theory generally principal, vector, etc.: \cite[\S 3.1]{td_alg-top}, say). 
\end{itemize}

It will be worth noting how some of these notions coalesce under appropriate topological conditions.

\begin{proposition}\label{pr:loc.homog.loc.triv}
  Let $\bU$ be a compact Lie group and $X$ a paracompact $\bU$-space. A continuous subhomogeneous Banach/Hilbert/Banach-algebra/$C^*$ bundle$_{\bU}$ $\cE\xrightarrowdbl{}X$ with semisimple fibers in the Banach-algebra case is locally homogeneous if and only if it is locally trivial. 
\end{proposition}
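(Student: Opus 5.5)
The trivial direction comes first: a locally trivial bundle is identified over the members of an open cover with $U\times E$, so its fibers over each such $U$ are mutually isomorphic and it is locally homogeneous. For the converse, local triviality is a local property. Equivariance plays no role in the plain (non-equivariant) notion of local triviality used here, so I fix $x\in X$ and work on a neighborhood $U\ni x$ over which all fibers are isomorphic to a single finite-dimensional object $E$ with $\dim E=n$. The plan is to produce $n$ continuous local sections through a basis of $\cE_x$ and show they stay a basis nearby, which yields a trivialization.

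\emph{Step 1: local sections.} Paracompactness of $X$, hence complete regularity, together with the standard fact that continuous Banach bundles over completely regular (or paracompact/locally compact) bases have enough continuous sections \cite{fd_bdl-1,dg_ban-bdl}, gives for each $v\in\cE_x$ a continuous section $\sigma$ with $\sigma(x)=v$. I choose sections $\sigma_1,\dots,\sigma_n$ through a basis of $\cE_x$.

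\emph{Step 2: linear independence persists.} The map $y\mapsto \inf_{|c|=1}\|\sum c_i\sigma_i(y)\|_y$ is lower semicontinuous in the continuous-norm setting, by continuity of the norm and compactness of the unit sphere in $\bC^n$. It is positive at $x$, so the $\sigma_i(y)$ remain independent on a neighborhood $V\ni x$. Since $\dim\cE_y=n$ on $V\subseteq U$, they form a basis there. The map $V\times\bC^n\to\cE|_V$, $(y,c)\mapsto\sum c_i\sigma_i(y)$, is then a continuous bijection. Its inverse is continuous by the usual Banach-bundle criterion (\cite[Proposition II.13.16]{fd_bdl-1}-style): a fiberwise-linear continuous map with norms uniformly controlled near each point is a homeomorphism, and the uniform lower bound from Step 2 supplies exactly this control. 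This settles the Banach case. In the Hilbert case I run Gram–Schmidt on the $\sigma_i$, which is continuous because the inner products $\langle\sigma_i,\sigma_j\rangle$ are continuous by polarization, and this gives a unitary trivialization.

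\emph{Step 3: algebra cases (main obstacle).} Here the trivialization must also intertwine the multiplications. I would first trivialize as a vector bundle by Steps 1–2, so that the product becomes a continuous family of multiplications $m_y$ on the fixed vector space $\bC^n$, each isomorphic to a fixed semisimple algebra $A$ by homogeneity. The problem is to find a continuous family $g_y\in GL_n$ with $g_y\cdot m_x=m_y$ near $x$. I would use rigidity of semisimple algebras: $H^2(A,A)=0$ by Wedderburn and separability, so the $GL_n$-orbit of $m_x$ is open in its Zariski closure, and the orbit map $GL_n\to GL_n/\mathrm{Aut}(A)$ is a smooth principal bundle and hence admits local smooth sections. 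All nearby $m_y$ lie in this single orbit by homogeneity, and composing the continuous map $y\mapsto m_y$ into the orbit with a local section gives the desired $g_y$. For $C^*$ bundles I would use the same argument with $U(n)$ in place of $GL_n$: take the Hilbert-space trivialization from the GNS inner product coming from a continuous faithful trace, or alternatively apply polar-decomposition of $g_y$ to unitarize the isomorphisms, since $*$-isomorphisms between $C^*$-algebras are automatically isometric. The delicate point I anticipate is verifying that the orbit is locally closed, so that the subspace topology agrees with the quotient topology. This is where semisimplicity is essential, and I expect it to be the main obstacle; it can alternatively be handled by citing the rigidity results of \cite{MR5060733}.
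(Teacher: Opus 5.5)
Your argument is correct in substance, but it follows a different route from the paper.

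\textbf{The paper's route.} The paper works orbit by orbit. Local triviality of $\cE|_Y$ over a compact orbit $Y$ comes from Fell's theorem \cite{fell_struct}, or from Johnson's result \cite{john_approx} that almost-multiplicative maps out of finite-dimensional semisimple algebras are close to homomorphisms. The equivariant germ-extension theorems of \cite{MR5060733} then extend the locally trivial $\cE|_Y$ to a locally trivial subbundle over a $\bU$-invariant neighborhood. By dimension count that subbundle must be all of $\cE$ there.

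\textbf{Your route.} You work at a single point instead. You use enough continuous sections (Douady--dal Soglio-H\'erault; it is paracompactness, not mere complete regularity, that supplies them). A compactness argument gives persistence of linear independence and continuity of the inverse. A homogeneous-space lifting handles the multiplication.

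\textbf{What each buys.} Your route is elementary and self-contained. Once the fibers are assumed mutually isomorphic, it does not even need semisimplicity. The paper needs rigidity because its extension theorem does not presuppose isomorphic fibers. In exchange, the paper's route yields trivializations over $\bU$-invariant neighborhoods of orbits, compatible with the $\bU$-structure. Your proof sets that aspect aside, which is legitimate given that local triviality is defined via plain identifications with $U\times E$.

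\textbf{Two points to tighten.}

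First, what you flag as the main obstacle is not one. Every orbit of an algebraic group action is open in its Zariski closure, as you already assert. Hence the orbit is locally closed, and so locally compact, in the Euclidean topology. A transitive continuous action of the $\sigma$-compact group $GL_n$ on a locally compact Hausdorff space makes $GL_n/\mathrm{Aut}(A)\to GL_n\cdot m_x$ a homeomorphism, by the Baire-category open-mapping argument. What $H^2(A,A)=0$ buys is the stronger fact that the orbit is open among all associative laws. That is needed only if homogeneity meant locally constant dimension.

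Second, in the $C^*$ case the trace must be the canonical, automorphism-invariant one, $\tau_y(a)=\mathrm{Tr}(L_a)$ on $\cA_y$. It is continuous in $y$ once $\cA$ is trivialized as a vector bundle. With an arbitrary continuous faithful trace, the fibers written in orthonormal frames need not lie in a single $U(n)$-orbit. For example, on $\bC^2$ with $\tau_y=(t_y,1-t_y)$, the squared norms $t_y,1-t_y$ of the minimal projections are unitary invariants that vary with $y$. With the canonical trace this problem disappears:
\begin{itemize}
\item every $*$-isomorphism is unitary;
\item conversely, a unitary algebra isomorphism $\mathrm{Ad}(s)\circ\psi$, with $\psi$ a $*$-isomorphism, has $s^*s$ central, so it is itself a $*$-isomorphism;
\item compactness of $U(n)$ makes the orbit homeomorphism automatic.
\end{itemize}
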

\begin{proof}
  The backward implication is self-evident, so it is $(\Rightarrow)$ that forms the substance of the claim. 

  Given local triviality over the compact orbits of the action $\bU\circlearrowright X$, its local equivariant extensibility around such an orbit follows from \cite[Theorems 0.2 and 0.3]{MR5060733}. As to the orbits $Y\subseteq X$, the local triviality of $\cE|_Y$ both
  \begin{itemize}[wide]
  \item follows from the same proof technique (employing Johnson's \cite[Theorem 3.1]{john_approx} to the effect that almost-multiplicative morphisms of Banach/$C^*$-algebras with semisimple finite-dimensional domain are metrically close to morphisms);

  \item or can be extracted from literature precursors handling locally compact base spaces: directly in the $C^*$ case from \cite[Theorem 3.1]{fell_struct} (which \cite[Theorems 0.3]{MR5060733} generalizes) and by analogy in the other cases. 
  \end{itemize}
\end{proof}

\begin{remark}\label{re:loc.triv.clrf}
  Note that the local identifications with trivial objects relevant to \Cref{pr:loc.homog.loc.triv} are \emph{topological} but not \emph{metric}: the operative notion of isomorphism $\left(\cE\xrightarrowdbl{} X\right) \cong \left(\cF\xrightarrowdbl{} X\right)$ entails linear and topological (also multiplicative in the algebra case) fiber maps $\cE_x\cong \cF_x$ which need not be isometric. Or: what the cited source \cite[Definition II.13.8]{fd_bdl-1} refers to as \emph{isomorphisms} while \cite[p.23]{dg_ban-bdl} terms \emph{weak isomorphisms} (reserving the stronger version for norm-preserving variants).

  That the metric analogue of \Cref{pr:loc.homog.loc.triv} cannot hold is self-evident: a Banach bundle might be Hilbert along exactly one fiber $\cE_x$ but not along $\cE_y$ for $y$ clustering towards $x$. The universal example of this nature would have
  \begin{itemize}[wide]
  \item the space $X$ of compact convex origin-symmetric ($n$-dimensional) convex bodies in $\bR^n$ for $n\ge 2$ as its base, with the \emph{Hausdorff-metric}-induced \cite[\S 7.3.1]{bbi} topology;
  \item with the fiber at $x\in X$ the unique Banach space $\bR^n$ having the convex body $x$ as its unit ball (per the usual \cite[post Lemma 1.7.12]{schn_cvx_2e_2014} convex body/Banach structure correspondence). 
  \end{itemize}
\end{remark}

Recall \cite[Definition 3.5.7]{hus_fib} that a \emph{finite-type} bundle $\cE\xrightarrowdbl{\hspace{0pt}}X$ is one trivializable over (each member of) a finite open cover of $X$. The notion also makes sense in the context of $\bU$-equivariant vector bundles, local triviality, finite type and \emph{numerability} being as in \cite[Definition I.9.1]{td_transf-gp} (in conjunction with \cite[\S I.6]{td_transf-gp} for broader notions of equivariant numerability). Following \cite[\S\S 2.1 and 2.2]{td_alg-top}, $[X,Y]$ and $[X,Y]^0$ denote classes of (basepoint-preserving) maps $X\to Y$ up to homotopy (respectively base-point-preserving homotopy). 

There are ways in which homogeneous extensibility might be topologically obstructed, unrelated to issues of finite type.

\begin{example}\label{ex:not.emb.twisted.emb}
  All bundles in the present discussion are real, for some variety (analogous complex examples can be constructed). Consider
  \begin{itemize}[wide]
  \item a trivial rank-$r_0$ Hilbert bundle over a punctured ball
    \begin{equation}\label{eq:h0dd}
      \cH^0
      \xrightarrowdbl{\quad}
      \left(
        \bD^{d}_{\times}
        :=
        \bD^{d}\setminus\{0\}
      \right)
      ,\quad
      \bD^{d}:=\left\{x\in \bR^{d}\ |\ \|x\|\le 1\right\};
    \end{equation}
    
  \item a rank-$(r_1\le r_0)$ Hilbert bundle $\cH^{1}$ over the smaller ball $\frac 12 \bD^{d}$;
    
  \item and the resulting (F) Hilbert bundle $\cH\xrightarrowdbl{\hspace{0pt}}\bD^d$ formed by gluing the two via a continuous map
    \begin{equation}\label{eq:char.fn.2.isoms}
      \frac 12 \bD^d_{\times}
      =
      \left(\bD^d_{\times}\cap \frac 12 \bD^d\right)
      \xrightarrow{\quad \chi\quad}
      \left(
        \mathrm{Iso}_{r_1\to r_0}
        :=
        \left\{\text{isometries }\bR^{r_1}\xrightarrow{\quad} \bR^{r_0}\right\}
      \right)
    \end{equation}
    (a generalization of the familiar \emph{clutching} construction: \cite[Definition 2.7.2]{hjjm_bdle}, \cite[Proposition 10.7.1]{hus_fib}, etc.).
  \end{itemize}
  Formally, $\cH$ can be constructed via \cite[Proposition 1.3]{dg_ban-bdl} as follows.
  \begin{itemize}[wide]
  \item Consider, in first instance, a \emph{Hilbert family} $\cH\xrightarrowdbl{\pi}\bD^d$ (i.e. \cite[p.8]{dg_ban-bdl} a surjective map with Hilbert-space fibers $\cH_z:=\pi^{-1}z$) with
    \begin{equation*}
      \cH|_{\bD^d_{\times}}=\cH^{0}
      \quad\text{and}\quad
      \cH_0=\cH^1_0.
    \end{equation*}
    
  \item The subspace
    \begin{equation*}
      \Gamma_{00}(\cH)
      :=
      \left\{
        \bD^d\xrightarrow{s}\cH
        \ :\
        \begin{aligned}
                    \pi\circ s
                    &=\id_{\bD^d}\\
          s|_{\bD^d_{\times}}
          &\in \Gamma_{b}\left(\cH^0\right)\\
          s|_{F}
          &\in \Gamma\left(\cH^1\right)
            \ \text{for some closed neighborhood $F\ni 0$}
        \end{aligned}
      \right\},
    \end{equation*}
    with the last two conditions compatible upon embedding
    \begin{equation*}
      \cH^1|_{\frac 12 \bD^d_{\times}}
      \lhook\joinrel\xrightarrow{\quad \text{via $\chi$}\quad}
      \cH^0|_{\frac 12 \bD^d_{\times}},
    \end{equation*}
    is \emph{total} for $\cH$ in the sense \cite[p.14]{dg_ban-bdl} that
    \begin{equation*}
      \forall(x\in \bD^d)
      \quad:\quad
      \overline{\spn\left\{s(x)\ :\ s\in \Gamma_{00}\right\}}
      =
      \cH_x
    \end{equation*}
    with continuous norm, thus \cite[Proposition 1.3]{dg_ban-bdl} affording $\pi$ a unique Hilbert-bundle structure with $\Gamma_{00}\le \Gamma(\pi)$. 
  \end{itemize}
  $\mathrm{Iso}_{r_1\to r_0}$ being what \cite[\S 7.7]{steen_fib} refers to as the \emph{Stiefel manifold} $V_{r_0,r_1}$, we have \cite[Theorem 25.6]{steen_fib}
  \begin{equation}\label{eq:pi.iso.stief}
    \left[\frac 12\bD^d_{\times},\mathrm{Iso}_{r_1\to r_0}\right]
    \cong
    \pi_{d-1}\left(\mathrm{Iso}_{r_1\to r_0}\right)    
    \cong
    \begin{cases}
      \bZ&\text{if }d-1=r_0-r_1\text{ is even or }r_1=1\\
      \bZ/2&\text{if }d-1=r_0-r_1\text{ is odd and }r_1>1.
    \end{cases}
  \end{equation}
  Homotopically non-trivial $\chi$ are thus available whenever $d-1=r_0-r_1>0$ (and $r_1>0$). Selecting $\chi$ and the numerical parameters judiciously, \Cref{cor:non.ext.spec.chi} below confirms that $\cH$ will not extend across the singularity at $0\in \bD^d$ to a locally trivial rank-$r_0$ bundle, when, say
  \begin{itemize}
  \item $d-1=r_0-r_1=2$, for then indeed $\pi_{d-1}=\pi_2$ vanishes for $SO(r_0)$ \cite[\S 22.10]{steen_fib} (as in fact it does for all Lie groups whatsoever: \cite[Proposition V.7.5]{btd_lie_1995} for the compact case; hence also generally for arbitrary connected Lie groups, homotopy-equivalent to compact ones \cite[Theorem 13]{iw}).

  \item and $[\chi]$ is non-trivial, which according to \Cref{eq:pi.iso.stief} is achievable for $r_1>0$ and $r_0-r_1>0$ whenever $d-1=r_0-r_1$.
  \end{itemize}
\end{example}

\Cref{th:cn.ext} distills some of the general principles at work in \Cref{ex:not.emb.twisted.emb}, with the following useful piece of terminology formalizing the ad-hoc construction employed there; cf. \cite[\S 17.12]{steen_fib} for a notion of bundle \emph{characteristic map/class} very much in the same spirit. Throughout the sequel,
\begin{equation*}
  X\times \left(I:=[0,1]\right)
  \xrightarrowdbl{\quad}
  \cC X
  :=
  X\times [0,1]/X\times\{0\}
\end{equation*}
denotes the \emph{cone} \cite[Example 9.12(f)]{wil_top} on a space $X$ with variants such as $\cC_{\le \varepsilon}X$ or $\cC_{<\varepsilon}X$ denoting the images of those subsets of $X\times I$ where the second coordinate satisfies the condition indicated in the subscript (i.e. is $\le \varepsilon$ or $<\varepsilon$, in the two instances). We write $0_X\in \cC X$ for the tip $\cC_{\{0\}}X$. 

\begin{definition}\label{def:char.cl}
  Let $X$ be a paracompact topological space and $\cE\xrightarrowdbl{\hspace{0pt}}\cC X$ a continuous Banach $\Bbbk$-bundle, $\Bbbk\in \left\{\bR,\bC\right\}$
  \begin{itemize}[wide]
  \item trivial of rank $r_0$ over the punctured cone $\cC_{\times}X:=\cC X\setminus 0_X$;

  \item and of rank $r_1$ at $0_X$ (so in particular $r_0\ge r_1$, by continuity).  
  \end{itemize}
  \begin{enumerate}[(1),wide]
  \item\label{item:def:char.cl:[]} Extend a frame (ordered basis) of the fiber $\cE_{0_X}$ locally around $0_X$, as always possible \cite[post Theorem 5.1]{hk_shv-bdl}. In a sufficiently small neighborhood $\cC_{\le \varepsilon}X\ni 0_X$ the frame will remain linearly-independent, whence a map
    \begin{equation}\label{eq:chi.on.cex}
      \cC_{\le \varepsilon,\times}X
      \xrightarrow{\quad\chi\quad}
      \mathrm{Emb}(r_1\to r_0)
      :=
      \left\{\text{linear injections }\Bbbk^{r_1}\le \Bbbk^{r_0}\right\}
      \quad
      \left(\text{some $\varepsilon>0$}\right).
    \end{equation}
    The \emph{characteristic map (or class)} $[\chi]$ of $\cE$ (well-defined only up to homotopy) is the homotopy class $[\chi]$ or, more precisely, the image of that class in the colimit
    \begin{equation*}
      \varinjlim_{\varepsilon\searrow 0}\left[\cC_{\le \varepsilon,\times}X,\mathrm{Emb}\left(r_1\to r_0\right)\right].
    \end{equation*}

  \item\label{item:def:char.cl:[[]]} The \emph{characteristic orbit} $[[\chi]]$ of $\cE$ is the orbit of the characteristic class $[\chi]$ under the composition action
    \begin{equation*}
      \left[\cC_{\le \varepsilon,\times}X,\mathrm{Emb}(r_0\to r_0)=GL\left(r_0,\Bbbk\right)\right]
      \circlearrowright
      \left[\cC_{\le \varepsilon,\times}X,\mathrm{Emb}(r_1\to r_0)\right].
    \end{equation*}
    (interpreted as before, with $\varepsilon>0$ inessential). 
  \end{enumerate}
  Note that conversely, given \Cref{eq:chi.on.cex}, we can construct a Banach bundle $\cE_{\chi}$ as in \Cref{ex:not.emb.twisted.emb}.

  All of the above extends in the obvious fashion to the equivariant setting, given a compact Lie group $\bU$ operating on all objects/morphisms involved. 
\end{definition}

\begin{remarks}\label{res:drv.spc.pcpct}
  \begin{enumerate}[(1),wide]
  \item It is implicit in \Cref{def:char.cl}, and it will be taken for granted henceforth, that all spaces involved are paracompact if $X$ is: products such as $X\times I$ (paracompact and compact: \cite[Theorem 20.12(c)]{wil_top}); more generally, products of two paracompact spaces are paracompact if one of the factors is also locally compact \cite[Theorem 2]{MR216463}; and finally, the cone $\cC X$, as is easily checked given the paracompactness of the truncations $\cC_{\ge \varepsilon}X\cong X\times I$.

  \item The action in \Cref{def:char.cl}\Cref{item:def:char.cl:[[]]} will naturally not be one by \emph{group} automorphisms (generally; it \emph{can} be, as in the proof of \Cref{cor:non.ext.spec.chi}): when $r_0=r_1$ it is simply the left-translation action. By the same token, note that in that case $[[\chi]]$ is automatically trivial (i.e. contains the constant-map class $[\chi]$). 
  \end{enumerate}
\end{remarks}

\begin{theorem}\label{th:cn.ext}
  For a paracompact $X$ $\bU$-space with compact Lie $\bU$ and a $\bU$-map \Cref{eq:chi.on.cex} the corresponding Banach bundle$_{\bU}$ $\cE_{\chi}\xrightarrowdbl{\hspace{0pt}}\cC X$ extends across the tip $0_X$ to a rank-$r_0$ (locally) trivial bundle$_{\bU}$ if and only if the characteristic orbit $[[\chi]]$ is equivariantly trivial.
\end{theorem}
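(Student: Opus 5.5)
The idea is to read an extension across the tip as a choice of frame for the rank-$r_0$ fiber at $0_X$ compatible with the given trivialization on $\cC_{\times}X$. Comparing that frame with the trivializing frame on the punctured cone produces a map into $GL(r_0,\Bbbk)$, and composing $\chi$ with it should yield a map homotopic to a constant. Throughout I take ``trivial'' for $[[\chi]]$ to mean that the orbit contains the class of an equivariant map extending continuously across the tip, i.e.\ equivariantly constant near $0_X$ modulo the $\bU$-action on $\mathrm{Emb}(r_1\to r_0)$ through representations on $\Bbbk^{r_1},\Bbbk^{r_0}$. I will also use freely that $\cC_{\le\varepsilon}X$ is equivariantly contractible onto $0_X$ by the radial homotopy, which is $\bU$-equivariant because $\bU$ acts trivially on the cone coordinate.

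\emph{($\Rightarrow$).} Suppose $\ol{\cE}\supseteq\cE_\chi$ is a rank-$r_0$ locally trivial bundle$_{\bU}$ on $\cC X$, with $\cE_\chi$ a subbundle, so that $\ol{\cE}|_{\cC_\times X}=\cE_\chi|_{\cC_\times X}$ is trivialized by the frame $e$. Equivariant local triviality around the fixed point $0_X$ (\Cref{pr:loc.homog.loc.triv}, or directly \cite[Theorems 0.2 and 0.3]{MR5060733}) gives a $\bU$-equivariant frame $f$ of $\ol{\cE}$ on some $\cC_{\le\varepsilon}X$. On $\cC_{\le\varepsilon,\times}X$ write $f=e\cdot g$ with $g$ a $\bU$-map into $GL(r_0,\Bbbk)$. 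The frame extending a basis of $\cE_{0_X}$, expressed in $f$-coordinates, is $g^{-1}\chi$, and this extends continuously across $0_X$. Hence $[g^{-1}\circ\chi]$ lies in the orbit $[[\chi]]$ and extends over the cone. Contracting radially gives an equivariant homotopy to the constant map at the tip value, which is $\bU$-fixed in $\mathrm{Emb}(r_1\to r_0)$ for the tip representations. Thus $[[\chi]]$ is trivial.

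\emph{($\Leftarrow$).} Suppose $g\circ\chi\simeq c$ equivariantly on $\cC_{\le\varepsilon,\times}X$, where $c$ is constant (equivariant). Replacing the trivialization $e$ of $\cE_\chi$ on the punctured cone by $e\cdot g^{-1}$ is an equivariant change of trivialization near the tip. Away from the tip nothing needs to change, because $\cE_\chi|_{\cC_\times X}$ is trivial anyway and only the germ at $0_X$ matters for the extension problem. So we may assume $\chi$ itself is equivariantly homotopic to $c$.

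The main obstacle is passing from ``homotopic to constant'' to ``actually extends''. I would use the cone structure to reparametrize radially. Choose an equivariant homotopy $H\colon \cC_{\le\varepsilon,\times}X\times I\to\mathrm{Emb}(r_1\to r_0)$ from $\chi$ to $c$, and define
\[
\chi'(x,t):=H\bigl((x,\varepsilon),\,1-t/\varepsilon\bigr),\quad 0<t\le\varepsilon,
\]
which equals $\chi$ on the outer shell and tends to $c$ as $t\to0$. The bundle built from $\chi'$ (gluing as in \Cref{ex:not.emb.twisted.emb}) is isomorphic to $\cE_\chi$, since $\chi$ and $\chi'$ are equivariantly homotopic rel the shell $t=\varepsilon$. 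The extension of a frame of $\cE_{0_X}$ is only determined up to such homotopy, and the isomorphism class of the glued bundle should depend only on the germ class of $\chi$. This is the step I expect to need the most care, and I would prove it via \cite[Proposition 1.3]{dg_ban-bdl} applied to the total family $\Gamma_{00}$. Since $\chi'$ extends continuously as $c$ at $0_X$, I can extend the trivial rank-$r_0$ bundle over the whole cone, take its rank-$r_0$ fiber at the tip, and embed $\cE_{0_X}$ there via $c$. This gives the desired locally trivial $\ol{\cE}\supseteq\cE_\chi$, with equivariance preserved throughout because every map used is a $\bU$-map.
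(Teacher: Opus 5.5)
Your ($\Rightarrow$) direction is the paper's argument. A frame of the rank-$r_0$ extension near $0_X$ turns the tip frame into a map on the contractible $\cC_{\le\varepsilon}X$. That map differs from $\chi$ by a change of trivialization $g$, so the trivial class lies in $[[\chi]]$.

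The gap is in ($\Leftarrow$), at exactly the step you flag. You claim $\cE_{\chi'}\cong\cE_\chi$ because $\chi\simeq\chi'$, and neither of your justifications supports this.
\begin{enumerate}[(1)]
\item That a frame extension, hence $\chi$, is determined only up to homotopy is the converse statement. It says a given bundle yields a well-defined homotopy class, not that homotopic maps yield isomorphic bundles.
\item \cite[Proposition 1.3]{dg_ban-bdl} only builds a bundle topology from a total family of sections. It does not compare the two topologies defined by the families $\chi(\cdot)v$ and $\chi'(\cdot)v$.
\end{enumerate}
An isomorphism $\cE_\chi\cong\cE_{\chi'}$ needs an actual change of frame $\phi\colon\cC_{\le\varepsilon,\times}X\to GL(r_0,\Bbbk)$. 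It must be bounded with bounded inverse near $0_X$ and carry the sections $\chi(\cdot)v$ to continuous sections of $\cE_{\chi'}$ (for instance $\phi\chi=\chi'$ on the nose). Producing such a $\phi$ from a homotopy is the entire content of ($\Leftarrow$).

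The radial reparametrization also does no work. If homotopy invariance of the glued bundle were available, you could pass from $\chi\simeq c$ straight to $\cE_\chi\cong\cE_c$. Without it, $\cE_{\chi'}$ is simply a different bundle.

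The missing ingredient is homotopy lifting through the Stiefel bundle $U(r_0)\to\mathrm{Iso}_{r_1\to r_0}$, $u\mapsto u|_{\Bbbk^{r_1}}$ (use $O(r_0)$ over $\bR$).
\begin{enumerate}[(1)]
\item Work with isometric frames as in \Cref{ex:not.emb.twisted.emb}, and take the representative $g$ unitary, which is harmless up to homotopy.
\item The Stiefel map is a locally trivial bundle over a compact manifold, so it has the homotopy lifting property. Lift the constant end of the nullhomotopy of $g\chi$ and carry the lift along. This gives $\Psi\colon\cC_{\le\varepsilon,\times}X\to U(r_0)$ with $\Psi|_{\Bbbk^{r_1}}=g\chi$.
\item Set $\Phi:=g^{-1}\Psi$. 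This is a unitary $r_0$-frame on the punctured neighborhood with $\Phi|_{\Bbbk^{r_1}}=\chi$, i.e.\ it extends the tip frame.
\item Declare the constant-norm sections $p\mapsto\Phi(p)w$, $w\in\Bbbk^{r_0}$, continuous at $0_X$ with tip value $w$. By \cite[Proposition 1.3]{dg_ban-bdl} this gives a locally trivial rank-$r_0$ bundle, and it contains $\cE_\chi$ since $\chi(\cdot)v=\Phi(\cdot)v$.
\end{enumerate}
Equivalently, $\Phi^{-1}$ is the re-trivialization that makes $\chi$ literally constant. That is what the paper's one-line ($\Leftarrow$) asserts; the paper uses this lifting implicitly without stating it, so your write-up should supply it. In the equivariant case you additionally need the $\bU$-homotopy lifting property of this bundle. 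With this lemma in place, both the reparametrization and the appeal to germ classes can be dropped.
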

\begin{proof}
  Equivariantization is simply superimposed over the trivial-action counterpart, so we cast the proof non-equivariantly.
  \begin{enumerate}[label={},wide]
  \item\textbf{($\Leftarrow$):} Trivialize $\cE|_{\cC_{\times}X}$ so as to recover $\cE=\cE_{\chi}$ for constant (i.e. trivial) $\chi$, allowing the extension of that trivialization across $\cC X$.

  \item\textbf{($\Rightarrow$):} Assume otherwise and fix a rank-$r_0$ extension $\cE'\xrightarrowdbl{\hspace{0pt}}\cC X$ of $\cE$, so that the embedding
    \begin{equation*}
      \cE^1|_{\cC_{\le \varepsilon}X}
      \lhook\joinrel\xrightarrow{\quad}
      \cE'|_{\cC_{\le \varepsilon}X}
      ,\quad
      \text{small }\varepsilon>0
    \end{equation*}
    provides a nullhomotopic continuous map $\cC_{\le \varepsilon}X\to \mathrm{Iso}_{r_1\to r_0}$ (for the domain is contractible).     That map's restriction to the punctured $\cC_{\le \varepsilon,\times}X$ need not quite have the same homotopy class as $\chi$, but the two will coincide up to changing the trivialization of the ambient $\cE^0$ over $\cC_{\le \varepsilon,\times}X$. Consequently, the left action
    \begin{equation}\label{eq:iso.act}
      \left[\cC_{\le \varepsilon,\times}X,\mathrm{Iso}_{r_0\to r_0}\right]
      \circlearrowright
      \left[\cC_{\le \varepsilon,\times}X,\mathrm{Iso}_{r_1\to r_0}\right]
    \end{equation}
    resulting from the composition
    \begin{equation*}
      \mathrm{Iso}_{r_0\to r_0}
      \times
      \mathrm{Iso}_{r_1\to r_0}
      \xrightarrow{\quad\circ\quad}
      \mathrm{Iso}_{r_1\to r_0}
    \end{equation*}
    has $[\chi]$ and the trivial class in the same orbit.
  \end{enumerate}
\end{proof}

An immediate consequence of the forward implication ($\Rightarrow$) in \Cref{th:cn.ext}:

\begin{corollary}\label{cor:non.ext.spec.chi}
  In \Cref{ex:not.emb.twisted.emb} above, if
  \begin{equation*}
    r_1>0
    ,\quad
    d-1=r_0-r_1>0
    ,\quad
    \pi_{d-1}\left(SO(r_0)\right)=\{1\}
  \end{equation*}
  and the homotopy class $[\chi]$ is non-trivial then $\cH$ does not extend across the singularity at $0\in \bD^d$ to a locally trivial rank-$r_0$ bundle $\cH'\xrightarrowdbl{\hspace{0pt}}\bD^d$.
\end{corollary}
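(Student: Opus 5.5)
The plan is to deduce the statement from the forward implication of \Cref{th:cn.ext}, applied to the cone model of the punctured ball. First identify $\bD^d$ with the cone $\cC X$ on $X:=S^{d-1}$, the tip $0_X$ corresponding to $0\in\bD^d$. Under this identification the bundle $\cH$ of \Cref{ex:not.emb.twisted.emb} becomes a bundle $\cE_\chi$ as in \Cref{def:char.cl}: it is trivial of rank $r_0$ on the punctured cone and of rank $r_1$ at the tip. Its characteristic map is the gluing map $\chi$ of \Cref{eq:char.fn.2.isoms}, restricted to some $\cC_{\le\varepsilon,\times}X\simeq S^{d-1}$ and viewed in $\mathrm{Emb}(r_1\to r_0)$. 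Since $\mathrm{Iso}_{r_1\to r_0}\subseteq \mathrm{Emb}(r_1\to r_0)$ is a deformation retract (Gram--Schmidt), nothing is lost by working with isometries throughout, as the proof of \Cref{th:cn.ext} already does in \Cref{eq:iso.act}.

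Suppose, for contradiction, that a locally trivial rank-$r_0$ extension $\cH'$ exists. By ($\Rightarrow$) of \Cref{th:cn.ext}, the orbit $[[\chi]]$ contains the trivial class. Concretely, there is a map $g\colon S^{d-1}\to O(r_0)$ such that $g\cdot\chi$ is nullhomotopic in $\mathrm{Iso}_{r_1\to r_0}$. Unwinding the colimit in $\varepsilon$ is harmless, because all the punctured neighborhoods $\cC_{\le\varepsilon,\times}X$ are homotopy-equivalent to $S^{d-1}$ compatibly with the restriction maps.

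It remains to show that the orbit of $[\chi]$ is the singleton $\{[\chi]\}$, so that $[\chi]$ itself would be trivial, contradicting the hypothesis. The key step, and the main technical point, is that the hypothesis $\pi_{d-1}(SO(r_0))=1$ makes $g$ nullhomotopic.

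1. The sphere $S^{d-1}$ is connected because $d-1\ge 1$. Hence $g$ lands in a single component of $O(r_0)$.
2. Composing with a fixed element of $O(r_0)$ moves $g$ into $SO(r_0)$. This constant factor acts on $[S^{d-1},\mathrm{Iso}_{r_1\to r_0}]$ through a self-homeomorphism of the Stiefel manifold, which is a harmless reparametrization. If needed, one can instead first arrange $\chi$ to be based and then take $g$ based.
3. Free and based homotopy classes $S^{d-1}\to SO(r_0)$ agree, since $SO(r_0)$ is a connected topological group. So $[S^{d-1},SO(r_0)]\cong\pi_{d-1}(SO(r_0))=1$, and $g$ is homotopic to the constant map at the identity.
4. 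Consequently $g\cdot\chi\simeq\chi$ through the homotopy $g_t\cdot\chi$.

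This gives $[\chi]$ trivial, the desired contradiction.

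The delicate point is the $\pi_0(O(r_0))$ issue in step 2. If the constant reflection factor acts nontrivially on $\pi_{d-1}$ of the Stiefel manifold, it could in principle send $[\chi]$ to a trivial class. This cannot happen, because a homeomorphism of $\mathrm{Iso}_{r_1\to r_0}$ maps nontrivial free homotopy classes to nontrivial ones. Thus $[\chi]$ nontrivial implies that $h\cdot\chi$ is nontrivial for every constant $h\in O(r_0)$, and combining this with step 4 gives nontriviality of $g\cdot\chi$ for every $g$. With that observation the argument closes. In the complex case one replaces $O(r_0)$ with $U(r_0)$, which is connected, so step 2 is unnecessary.
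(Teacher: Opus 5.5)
Your proof is correct. It shares the paper's skeleton: invoke ($\Rightarrow$) of \Cref{th:cn.ext}, then use $\pi_{d-1}(SO(r_0))=\{1\}$ to reduce the acting group in \Cref{eq:iso.act} to $\pi_0(O(r_0))\cong\bZ/2$, generated by a constant reflection.

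You diverge at the last step. The paper uses Steenrod's explicit generator of $\pi_{r_0-r_1}(V_{r_0,r_1})$ to show that the reflection acts by sign change, and from this reads off that the trivial class has a singleton orbit. You instead note that a constant $h\in O(r_0)$ acts on $\mathrm{Iso}_{r_1\to r_0}$ by a self-homeomorphism. Such a map preserves both nullhomotopy and non-nullhomotopy, so $g\cdot\chi\simeq h\cdot\chi$ can never be trivial when $[\chi]$ is not. This is exactly the singleton-orbit property of the trivial class, obtained without any computation.

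Your route is more economical. It needs no knowledge of $\pi_{d-1}(V_{r_0,r_1})$, and for this implication it never uses $d-1=r_0-r_1$; that equality only guarantees that non-trivial $[\chi]$ exist. The paper's computation does yield finer information: the full orbit structure $\{n,-n\}$.

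One framing slip. Your sentence ``it remains to show that the orbit of $[\chi]$ is the singleton $\{[\chi]\}$'' and step 4's ``$g\cdot\chi\simeq\chi$'' are correct only for $g$ landing in $SO(r_0)$. In the $\bZ$ case the orbit of a class $n\neq 0$ is $\{n,-n\}$, not a singleton. Your final paragraph proves the statement actually needed, namely that the orbit avoids the trivial class, so the argument closes; the earlier claim should be reworded to match.
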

\begin{proof}
  Given the assumption that the homotopy group $\pi_{d-1}\left(SO(r_0)\right)$ of the relevant special orthogonal group $SO(r_0)$(= the identity connected component of $\mathrm{Iso}(r_0\to r_0)$) is trivial, the left-hand acting group in \Cref{eq:iso.act} is $\bZ/2$ and that equation thus displays a $\bZ/2$-action on either $\bZ$ or $\bZ/2$ by \Cref{eq:pi.iso.stief}. Were \Cref{eq:iso.act} simply
  \begin{equation*}
    \left(\sigma:=\text{generator of }\bZ/2\right)
    \xmapsto{\quad}
    \left(
      \pi_{r_0-r_1}(V_{r_0,r_1})
      \xrightarrow{\quad-\quad}
      \pi_{r_0-r_1}(V_{r_0,r_1})
    \right),
  \end{equation*}
  the proof would be complete by \Cref{th:cn.ext}: the trivial class would then have a singleton orbit. It thus remains to settle
  
  \textbf{Claim: the \Cref{eq:iso.act} action $\bZ/2\circlearrowright \left(\bZ\text{ or }\bZ/2\right)$ is the sign change.} Recall in first instance from \cite[Theorem 25.6]{steen_fib} how the identification $\pi_{d-1=r_0-r_1}\left(V_{r_0,r_1}\right)$ functions:
  \begin{itemize}[wide]
  \item fix an $(r_1-1)$-sized orthonormal basis $(v_j)_{j=1}^{r_1-1}\subset \bR^{r_0}$ for some $(r_1-1)$-dimensional $W\le \bR^{r_0}$;

  \item whereupon a generator $s$ for $\pi_{r_0-r_1}(V_{r_0,r_1})$ is represented by
    \begin{equation*}
      \left(\bS^{r_0-r_1}:=\text{unit sphere in $W^{\perp}$}\right) \ni
      v_0
      \xmapsto{\quad}
      \left(v_j\right)_{j=0}^{r_1-1}
      \in V_{r_0,r_1}.
    \end{equation*}
  \end{itemize}
  The generator $\sigma\in \pi_{r_0-r_1}O(r_0)\cong \bZ/2$ thus acts on $ns$ by
  \begin{equation*}
    \sigma\triangleright ns
    =
    \left(
      \bS^{r_0-r_1} \ni
      v_0
      \xmapsto{\quad}
      \left(\sigma f_n(v_0),v_1\cdots v_{r_1-1}\right)
      \in V_{r_0,r_1}
    \right)
    \in
    \pi_{r_0-r_1}(V_{r_0,r_1})
  \end{equation*}
  where $\bS^{r_0-r_1}\xrightarrow{f_n}\bS^{r_0-r_1}$ represents $n=n\id \in \pi_{r_0-r_1}(\bS^{r_0-r_1})\cong \bZ$ and $\sigma$ is identified with any single determinant-$(-1)$ element of $O(r_0-r_1)$ (i.e. any element in the non-identity component), operating on $\bS^{r_0-r_1}$. This description confirms the sign-action claim, finishing the proof.
\end{proof}

While (as just discussed) the Hilbert bundle $\cH\xrightarrowdbl{\hspace{0pt}}\bD^d$ of \Cref{ex:not.emb.twisted.emb} will not extend to a locally trivial rank-$r_0$ bundle across all of $\bD^d$, it \emph{does} embed into a higher-rank locally trivial (hence trivial) bundle over $\bD^d$ by \Cref{cor:pnct.ball.emb} below. In the context of discussing various separation properties such as \emph{normality} (i.e. the $T_4$ separation property \cite[Definition 15.1]{wil_top}) or \emph{complete regularity} (\emph{$T_{3\frac 12}$} \cite[Definition 14.8]{wil_top}) we assume at least the $T_1$ axiom (points are closed) unless specified otherwise. 

\begin{proposition}\label{pr:2strata.triv.embed}
  Let $\bU$ be a compact Lie group. A subhomogeneous (F) Banach bundle$_{\bU}$ $\cE\xrightarrowdbl{\pi}X$ vanishing along a closed $\bU$-invariant subset $F\subseteq X$ and finite-type homogeneous  over the open subset $U:=X\setminus F$ assumed normal embeds into a locally trivial vector bundle$_{\bU}$. 
\end{proposition}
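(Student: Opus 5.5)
The plan is to build, over $U=X\setminus F$, a fiberwise embedding $\iota$ of $\cE|_U$ into a trivial bundle $U\times V$ with $V$ a finite-dimensional $\bU$-representation. The embedding should distort norms only by a \emph{uniform} constant. We then extend it by zero over $F$ to a map into the (locally) trivial bundle$_{\bU}$ $X\times V$. The uniform two-sided norm control is what makes the extended map continuous at $F$, and also a homeomorphism onto its image there.

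\textbf{Step 1: a uniformly equivalent continuous Euclidean structure.} $\cE|_U$ is homogeneous of some rank $n$, and it is locally trivial over $U$ by \Cref{pr:loc.homog.loc.triv} (or directly, since it is finite-type). Its fiber norms vary continuously, so the unit balls of $\cE_y$ form a continuously varying family of symmetric convex bodies; in a local frame this is continuity in the Hausdorff metric, as in \Cref{re:loc.triv.clrf}. Take the Löwner–John ellipsoid of each unit ball. Its inner product $\langle\cdot,\cdot\rangle_y$ depends continuously on $y$, is canonical (hence equivariant whenever $\bU$ acts isometrically), and satisfies
\begin{equation*}
  \|v\|_y\le \langle v,v\rangle_y^{1/2}\le \sqrt n\,\|v\|_y .
\end{equation*}
If the action is not isometric, I would first average the norms over Haar measure on compact $\bU$. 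Averaging yields equivalent norms, but the constants need not be uniform, so this point may need an extra hypothesis or care. I would then average the inner product itself over $\bU$; this preserves the two-sided bounds.

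\textbf{Step 2: an isometric embedding into a trivial bundle.} Finite type gives a finite open cover $U=U_1\cup\dots\cup U_k$ over whose members $\cE$ is trivial. In the equivariant setting I would take the $U_i$ to be $\bU$-invariant with equivariant trivializations into representations $W_i$, per the numerable-equivariant framework of \cite[\S I.9]{td_transf-gp}. Apply Gram–Schmidt with respect to $\langle\cdot,\cdot\rangle_y$ to make each trivialization $\tau_i:\cE|_{U_i}\to U_i\times W_i$ fiberwise isometric. Normality of $U$ provides a finite partition of unity $(\rho_i)$ subordinate to the cover, which can be averaged over $\bU$ to make it invariant. Set
\begin{equation*}
  \iota_y(v):=\left(\sqrt{\rho_i(y)}\,\tau_i(v)\right)_{i=1}^{k}\in V:=\bigoplus_i W_i .
\end{equation*}
Since $\sum_i\rho_i=1$, each $\iota_y$ is an isometry from $(\cE_y,\langle\cdot,\cdot\rangle_y)$ into $V$. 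The map $\iota$ is continuous and equivariant.

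\textbf{Step 3: extension over $F$ and verification.} Define $\Phi:\cE\to X\times V$ by $\Phi(v)=(\pi v,\iota_{\pi v}v)$ on $\pi^{-1}U$ and $\Phi(0_x)=(x,0)$ for $x\in F$. By Step 1 we have $\|v\|_y\le\|\iota_yv\|\le\sqrt n\,\|v\|_y$ for all $y\in U$.

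\emph{Continuity at $0_x$, $x\in F$.} Suppose a net $v_\alpha\to 0_x$. By the neighborhood axiom for (F) Banach bundles this forces $\|v_\alpha\|\to 0$ and $\pi v_\alpha\to x$, so $\Phi(v_\alpha)\to(x,0)$.

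\emph{Continuity of the inverse on the image.} Suppose $\Phi(v_\alpha)\to(x,0)$. Then $\|v_\alpha\|\le\|\iota v_\alpha\|\to0$ and $\pi v_\alpha\to x$, so $v_\alpha\to 0_x$, again by the neighborhood system at $0_x$.

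Over $U$, $\Phi$ is a homeomorphism onto its image because it is a continuous fiberwise-linear injection between locally trivial bundles. Hence $\Phi$ realizes $\cE$ as a subbundle$_{\bU}$ of $X\times V$.

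The main obstacle is Step 1: producing fiberwise Euclidean structures that are continuous, equivariant, and \emph{uniformly} comparable to the given norms near $F$. Without this, the finite-type embedding alone only yields a continuous injective morphism, with no control on the inverse at $F$. The Löwner-ellipsoid choice with its $\sqrt n$ constant is what I would try first.
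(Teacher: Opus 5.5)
Your proof is correct, and its skeleton matches the paper's. Finite type plus normality of $U$ gives a finite invariant partition of unity. This yields an equivariant embedding of $\cE|_U$ into a trivial bundle $U\times V$ as in \cite[Proposition I.9.7]{td_transf-gp}, and $U\times V$ is then extended to $X\times V$, with $\cE$ sitting inside via zero fibers over $F$.

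The difference is what happens over $F$. The paper stops after producing a complement $\cF$ with $\cF\oplus\cE|_U$ trivial and extending the trivial bundle. It leaves implicit why the resulting map $\cE\to X\times V$ is continuous, and a homeomorphism onto its image, at points over $F$. You make exactly this the core of the argument, and rightly so, since an arbitrary embedding of $\cE|_U$ need not extend. For instance, take $X=[0,1]$, $F=\{0\}$ and $\cE|_{(0,1]}=(0,1]\times\bR$ with norm $\|(y,t)\|=y|t|$. The identity embedding extended by zero is discontinuous at $0_0$, because $(y,y^{-1/2})\to 0_0$ in $\cE$ but not in $X\times\bR$. Your L\"owner--John inner product, uniformly $\sqrt n$-comparable to the given norms, followed by the fiberwise-isometric map $v\mapsto\left(\sqrt{\rho_i}\,\tau_i v\right)_i$, cleanly supplies the needed norm control. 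Your Step 3 verification is then sound.

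Two small repairs are needed.

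\textbf{Local objects and normalization.} Equivariant local triviality in tom Dieck's sense is over local objects $\bU\times_{\bK}W\to\bU/\bK$, not over products. So over $U_i$ what you get are equivariant \emph{embeddings} $\tau_i:\cE|_{U_i}\hookrightarrow U_i\times W_i$. To obtain them, compose with $\bU\times_{\bK}W\hookrightarrow\bU/\bK\times W_i$, $[g,w]\mapsto(g\bK,gw)$, for a $\bU$-representation $W_i$ containing $W$ as a $\bK$-subrepresentation. Make these fiberwise isometric using the polar factor $\tau_i\left(\tau_i^*\tau_i\right)^{-1/2}$ rather than Gram--Schmidt, which depends on a frame and is not equivariant.

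\textbf{Non-isometric actions.} Your hedge here can be dropped. Step 3 only needs constants that are uniform near each point of $F$. Continuity of $\bU\times\cE\to\cE$ at the zero section, together with compactness of $\bU$, yields on an invariant neighborhood of each orbit a constant $C$ with $C^{-1}\|v\|\le\|gv\|\le C\|v\|$. The Haar-averaged norm is then locally uniformly equivalent to the original one. Its John inner product is automatically invariant because the construction is canonical, so no further averaging of the inner product is needed.
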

\begin{proof}
  Coupled with the finite-type condition, normality ensures numerability:
  \begin{itemize}[wide]
  \item A finite open cover $\cU=\left(U_i\right)_{0}^n$ of a $T_4$ space (chosen here so as to trivialize $\cE$) is well known to admit subordinate \emph{partitions of unity} in the sense of \cite[\S 13.1]{td_alg-top} in the more common non-equivariant setting (in fact, even \emph{local} finiteness suffices: \cite[Theorem 13.1.1]{td_alg-top}, \cite[\S IX.4.3, Th\'eor\`eme 3]{bourb_top_fr_5-10}, etc.).

  \item The $\bU$-equivariant extension of the preceding remark is no more difficult: the strategy adopted in \cite[Proposition I.3.12]{td_transf-gp} for arbitrary open covers of \emph{paracompact} spaces \cite[Definition 20.6]{wil_top} will do, for instance.
  \end{itemize}
  Given numerability, \cite[Proposition I.9.7]{td_transf-gp} provides a vector bundle $\cF\xrightarrowdbl{\hspace{0pt}}U$ with $\cF\oplus \cE|_U$ trivial; the conclusion follows by simply extending that trivial bundle across all of $X$.
\end{proof}

\begin{remark}\label{re:homog.obstruct}
  Note the distinction between obstructions to embedding into \emph{homogeneous} Banach bundles (typically requiring finite-type constraints \cite[Proposition I.9.7]{td_transf-gp}) and embeddability into locally trivial Banach bundles in general, with possibly infinite-dimensional fibers: the former is much more easily obstructed. Indeed, \emph{all} separable Banach bundles over compact Hausdorff spaces are embeddable in the latter, looser sense (what \cite[Corollary 2.8]{MR2120237}, to this effect, calls \emph{subtrivial}).

  The parallel dichotomy between embeddability into large locally trivial $C^*$ bundles coupled with obstructed embeddability into homogeneous ones is in evidence in \cite[Example 3.6]{bg_cx-exp}: as a subhomogeneous continuous $C^*$ field over a compact metrizable space, said example embeds into the trivial bundle with the \emph{Cuntz algebra} $\cO_2$ \cite[Example 4.6.10]{bo_cast_2008} as its fiber by \cite[Theorem A.1]{MR1461207}. Indeed, subhomogeneity ensures the requisite \emph{exactness} (in the sense of \cite[Definition 2,3.2]{bo_cast_2008}) by \cite[Proposition 2.7.7]{bo_cast_2008}.
\end{remark}

\begin{corollary}\label{cor:pnct.ball.emb}
  Any subhomogeneous (F) Banach bundle $\cE\xrightarrowdbl{\hspace{0pt}}\bD$ over a ball with a single dimension jump at $0\in \bD$ and of finite type over $\bD^{\times}:=\bD\setminus\{0\}$ embeds into a trivial homogeneous one. 
\end{corollary}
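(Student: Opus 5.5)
The plan is to reduce to \Cref{pr:2strata.triv.embed} by splitting $\cE$ into a part living on the whole ball and a part vanishing at the origin. Write $r_0$ for the rank over $\bD^{\times}$ and $r_1\le r_0$ for the rank at $0$. Here $\bU$ is trivial, $\bD$ is compact metrizable (hence normal), and $\bD^\times$ is normal as a metrizable space.

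\textbf{Step 1.} Following \Cref{def:char.cl}\Cref{item:def:char.cl:[]}, extend a frame of $\cE_0$ to continuous sections $s_1,\dots,s_{r_1}$ that stay linearly independent on a small closed ball $\varepsilon\bD$. Multiply them by a bump function $\phi$ that equals $1$ near $0$ and is supported in $\varepsilon\bD$. Globally, $\phi s_1,\dots,\phi s_{r_1}$ then define a morphism $\Phi\colon \bD\times\Bbbk^{r_1}\to\cE$ of (F) Banach bundles. It is an isomorphism onto $\cE_0$ at the origin and injective on $\varepsilon'\bD$ for some smaller $\varepsilon'$.

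\textbf{Step 2.} Fix a trivial bundle $\cT=\bD\times\Bbbk^{N}$ together with a morphism $\Psi\colon\cE\to\cT$ that is fiberwise injective on $\bD^\times$. One way to build $\Psi$ is to apply \Cref{pr:2strata.triv.embed} to an auxiliary bundle. Take $\cE'\le \cE$ to be the subbundle of sections vanishing at $0$, which is rank $r_0$ on $\bD^\times$ and $0$ at the origin. The finite-type hypothesis gives a numerable finite trivializing cover of $\bD^\times$. Applying \cite[Proposition I.9.7]{td_transf-gp} on $\bD^\times$, we get a complement $\cF$ with $\cE|_{\bD^\times}\oplus\cF\cong\bD^\times\times\Bbbk^{M}$. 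This yields fiberwise isometric-up-to-scaling maps $\cE_x\to\Bbbk^M$ for $x\ne0$. Multiplying by a radial function $\rho(x)\to0$ as $x\to0$ fast enough makes the resulting map $\Psi_0$ extend continuously, with value $0$ on the fiber at $0$.

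\textbf{Step 3.} Now define the candidate embedding
\begin{equation*}
  \cE\ni v\longmapsto \left(\Psi_0(v),\ \Theta(v)\right)\in \bD\times\left(\Bbbk^{M}\oplus\Bbbk^{r_1}\right).
\end{equation*}
Here $\Theta$ is a continuous left inverse of $\Phi$ near $0$. It can be built from a bounded projection of $\cE|_{\varepsilon'\bD}$ onto $\operatorname{im}\Phi$, which exists locally by finite-dimensionality and local continuity of the frame; we then cut it off with $\phi$. Fiberwise injectivity holds at $x\ne0$ because of the $\Psi_0$ component, and at $0$ because of $\Theta$, which restricts to an isomorphism on $\cE_0$.

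\textbf{Step 4.} The remaining issue is that "embeds" should mean $\cE$ is homeomorphic onto its image subbundle, not merely fiberwise injective. Near $0$ one must check that norms are controlled from below uniformly. I expect this to be the main obstacle, since $\Psi_0$ degenerates as $x\to0$. The way to handle it is to exploit the (F) topology. A net $v_\alpha\to v$ in the image with $\pi v=0$ decomposes as $v_\alpha=\Phi\Theta v_\alpha+(v_\alpha-\Phi\Theta v_\alpha)$. The first summand converges because $\Theta$ converges. The second is kernel-of-$\Theta$ data, and we need it to tend to $0_0$. For this I would choose $\rho$ to decay slowly enough relative to the rate at which $\cE$'s norm on $\ker\Theta$ degenerates, or alternatively rescale by an adapted continuous function. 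If this balancing fails, the fallback is to argue directly via the totality criterion \cite[Proposition 1.3]{dg_ban-bdl}, declaring the pulled-back topology to be the one induced by the sections $\Gamma_{00}$, as in \Cref{ex:not.emb.twisted.emb}.
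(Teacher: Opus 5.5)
Your Step 4 is not a technicality that a better choice of $\rho$ can repair: with a scalar damping, $J=(\Psi_0,\Theta)$ is never a homeomorphism onto its image. Since the jump is genuine ($r_1<r_0$), choose $x_n\to0$, $x_n\neq0$, and $w_n\in\ker\Theta_{x_n}$ with $\|w_n\|=1$. Write $\iota$ for the fiberwise injection coming from the complement; your ``fast enough'' choice secures continuity of $\Psi_0$ through $\rho(x)\|\iota_x\|\to0$. Then $\Theta(w_n)=0$ and $\|\Psi_0(w_n)\|\le\rho(x_n)\|\iota_{x_n}\|\to0$, so $J(w_n)\to J(0_0)$. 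Yet $w_n\not\to0_0$ in $\cE$, because neighbourhoods of $0_0$ consist of norm-small vectors. Nor can $(w_n)$ converge to anything else: a limit $v$ would lie over $0$ with $\Theta(v)=0$, forcing $v=0_0$. So $J^{-1}$ is discontinuous at $J(0_0)$.

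There is nothing to balance, because the norm of $\cE$ on $\ker\Theta$ does not degenerate at all. To keep $J(w)$ away from $J(0_0)$ for unit $w\in\ker\Theta_x$ while $\Psi_0(s_i(x))\to0$, the damping has to be direction-dependent: strong on $\operatorname{im}\Phi$, absent on $\ker\Theta$. That is exactly the splitting your argument lacks. The fallback through \cite[Proposition 1.3]{dg_ban-bdl} does not close the gap either. It yields \emph{some} Banach-bundle topology on the image of $J$, but what must be shown is that this topology is the one of $\cE$, and $(w_n)$ shows it is not.

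The paper damps nothing. It extends the frame of $\cE_0$ to a global trivial subbundle $\cF\cong\bD\times\Bbbk^{r_1}\le\cE$ and writes $\cE=\cF\oplus\cF^{\perp}$ via an inner product on $\cE|_{\bD^{\times}}$. It then observes $\cF^{\perp}_0=\{0\}$ and applies \Cref{pr:2strata.triv.embed} to $\cF^{\perp}$ alone.

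Your cut-off frame can be salvaged along these lines without a global extension, e.g. via $v\mapsto\bigl(\iota((1-\phi P)v),\ \phi\,\Theta(v)\bigr)$. Here $P_x$ is a projection of $\cE_x$ onto $\operatorname{span}\{s_i(x)\}$ and $\Theta$ gives the frame coordinates of $Pv$. This map is fiberwise injective: where $\phi<1$ because $1-\phi P$ is invertible, and where $\phi=1$ because $\Theta$ detects $Pv$.

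This salvage needs two uniform estimates near $0$ that ``finite-dimensionality and local continuity of the frame'' cannot supply, since there is no trivialization around the jump:
\begin{itemize}
\item $P$ is continuous along $\cE_0$ exactly when $\|P_x\|$ stays bounded as $x\to0$;
\item $\iota$ must satisfy $c\|w\|\le\|\iota_xw\|\le C\|w\|$ near $0$.
\end{itemize}
The same estimates are what make the paper's orthogonal splitting topological at $0$. Both follow from a Hermitian metric on $\cE|_{\bD^{\times}}$ uniformly equivalent to the given norm. To build it, glue by a partition of unity local Euclidean norms within a fixed dimensional factor of $\|\cdot\|_x$ (e.g. by John's theorem). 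Then take $P$ orthogonal, and renormalize the complement from \cite[Proposition I.9.7]{td_transf-gp} to be isometric for that metric.

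Relatedly, that complement does not give maps ``isometric up to scaling'': a non-Euclidean fiber norm admits no such map into $\Bbbk^{M}$. Without renormalization you have no norm control near $0$ whatsoever.
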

\begin{proof}
  Assuming the fiber $\cE_0$ $d$-dimensional, $d$ linearly-independent sections extend locally around $0$ \cite[p.15]{dg_ban-bdl} and in fact globally: simply observe that any scaling
  \begin{equation*}
    \bD^d_{\le \varepsilon}
    \xrightarrow[\quad\cong\quad]{\quad\varphi:=\frac{\bullet}{\varepsilon}\quad}
    \bD^d
  \end{equation*}
  implements a Banach-bundle isomorphism between $\cE_{\bD^d_{\le \varepsilon}}$ and the pullback (what \cite[\S II.13.3]{fd_bdl-1} calls the \emph{retraction}) $\varphi^* \cE$. This provides an embedding $\bD\times \Bbbk^d \cong \cF \le \cE$ of Banach bundles, $\Bbbk\in \left\{\bR,\bC\right\}$ denoting the ground field.
  
  The paracompactness of $\bD^{\times}$ ensures \cite[Lemma 2]{zbMATH03179258} the existence of an inner product on $\cE|_{\bD^{\times}}$, whence a decomposition
  \begin{equation*}
    \cE
    \cong
    \cF\oplus \cF^{\perp}
    ,\quad
    \cF^{\perp}_0=\{0\}. 
  \end{equation*}
  The conclusion follows from an application of \Cref{pr:2strata.triv.embed} to $\cF^{\perp}$ with $X:=\bD$ and $F:=\left\{0\right\}$.
\end{proof}

\begin{remarks}\label{res:phl.t312.t4}
  \begin{enumerate}[(1),wide]
  \item\label{item:res:phl.t312.t4:gap} In reference to the possibility of complementing a vector bundle $\cE$ with an $\cF$ so that $\cE\oplus \cF$ is trivial (as was needed in the above proof of \Cref{pr:2strata.triv.embed}), \cite[Lemma 2.6]{zbMATH05172034} asserts among other things that (non-equivariantly) over a $T_{3\frac 12}$ base space
    \begin{enumerate}[(a),wide]
    \item\label{item:res:phl.t312.t4:ft} $\cE$ is of finite type;
    \item\label{item:res:phl.t312.t4:cmplmt} if and only if there is a vector bundle $\cF$ over $X$ with $\cE\oplus \cF$ trivial. 
    \end{enumerate}
    The proof appears to me to contain a small gap. The issue is with that discussion's reference to \cite[Proposition 3.5.8]{hus_fib}, where the same equivalence appears as (1) $\Leftrightarrow$ (3): while the leftward implication (1) $\Leftarrow$ (3) (i.e. \Cref{item:res:phl.t312.t4:ft} $\Leftarrow$ \Cref{item:res:phl.t312.t4:cmplmt}) does go through unconditionally, the converse in turn appeals to \cite[Theorem 3.5.5]{hus_fib}, where paracompactness is assumed (though \cite[Proposition 3.5.8]{hus_fib} simply refers to ``spaces'').
    
    An examination of the proof of \cite[Theorem 3.5.5]{hus_fib} indicates that the crucial ingredient in confirming \Cref{item:res:phl.t312.t4:ft} $\Rightarrow$ \Cref{item:res:phl.t312.t4:cmplmt} is the existence of a partition of unity subordinate to a cover $\cU=\left(U_i\right)_{0}^n$ trivializing $\cE$. That cover being assumed finite, normality will suffice (as noted in the proof of \Cref{pr:2strata.triv.embed}). Given the strict strength ordering
    \begin{equation*}
      T_{3\frac 12}
      \quad
      \overset{\text{\cite[Corollary 15.7]{wil_top}}}{\prec}
      \quad
      T_4
      \quad
      \overset{\text{\cite[Theorem 20.10]{wil_top}}}{\prec}
      \quad
      \text{paracompact}
    \end{equation*}
    (\cite[Examples 82 and 143]{ss_countertop} for strictness), it is unclear whether the $T_{3\frac 12}$ condition assumed in \cite[Lemma 2.6]{zbMATH05172034} suffices.

  \item\label{item:res:phl.t312.t4:plus.numrbl} What is in any case true is that \Cref{item:res:phl.t312.t4:cmplmt} holds if and only if \Cref{item:res:phl.t312.t4:ft} does in conjunction with numerability: trivializability over a finite open cover admitting a subordinate partition of unity. This is precisely what \cite[Proposition I.9.7]{td_transf-gp} proves (equivariantly). 
    
  \item\label{item:res:phl.t312.t4:nrm.equiv} Bolstering the above-noted apparent necessity of normality, note that the existence of partitions of unity subordinate to arbitrary
    \begin{itemize}[wide]
    \item locally-finite;
    \item equivalently, finite;
    \item equivalently, binary
    \end{itemize}
    open covers in fact \emph{characterizes} normal spaces (among those assumed $T_1$, as we always do here barring disavowals). One implication is the already-cited \cite[Theorem 13.1.1]{td_alg-top}; for the converse, simply observe that
    \begin{itemize}[wide]
    \item a partition of unity $\left(\varphi_{0,1}\right)$ subordinate to an open cover $X=U_0\cup U_1$ provides an open cover
      \begin{equation*}
        X
        =
        V_0\cup V_1
        ,\quad
        V_i:=\varphi_i^{-1}\left(\bR_{>0}\right)
      \end{equation*}
      with $\ol{V_i}\subseteq U_i$, $i=0,1$;

    \item which precisely translates to the formulation \cite[Lemma 31.1(b)]{mnk} of normality requiring that for arbitrary
      \begin{equation*}
        \left(\text{closed }F_0:=X\setminus U_0\right)
        \subseteq
        \left(\text{open }U_1\right)
      \end{equation*}
      there be a neighborhood $V_1\supseteq F_0$ with $\ol{V_1}\subseteq U_1$.
    \end{itemize}
  \end{enumerate}
\end{remarks}

The device employed in the proof of \Cref{cor:pnct.ball.emb}, extending the origin fiber to a locally trivial subbundle, is one instance of a broader principle we make a detour to isolate as \Cref{th:ext.loc.triv.hmtp.eq}.

\pf{th:ext.loc.triv.hmtp.eq}
\begin{th:ext.loc.triv.hmtp.eq}
  In the specific case of \Cref{item:th:ext.loc.triv.hmtp.eq:ban}:
  \begin{itemize}[wide]
  \item Effect the desired extension \emph{locally} to $\ol{\cF}|_{U}$ around $F$ by \cite[Theorem 0.2(1)]{MR5060733}, on a paracompact neighborhood $U\supseteq F$.

  \item $U$ and $X$ may be assumed equivariantly homotopic through maps identical on $F$ by hypothesis, so that $X\setminus F$ and $U\setminus F$ are also equivariantly homotopic.

  \item Focusing on loci where the bundles are homogeneous, note that the embedding $\ol{\cF}\le \cE$ over $U\setminus F$ amounts to \emph{reducing the unitary structure group} \cite[Definition 12.1]{MR3331607} $\bU(\dim \cE_x)$ of the principal bundle attached to $\cE$ to a subgroup $\bU(d')\times \bU(d'')\le \bU(d:=\dim \cE_x)$ for $d'+d''=d$.

  \item Such a reduction is a homotopy invariant, by the classification \cite[Theorems I.8.12 and I.8.15]{td_transf-gp} of equivariant bundles by means of maps into equivariant classifying spaces.

  \item If possible for $U\setminus F$, it must thus be possible for the equivariantly homotopic $X\setminus F$. 
  \end{itemize}
  The other items run in parallel, appealing to \cite[Theorem 0.2(2) and Theorem 0.3]{MR5060733} (the former for \Cref{item:th:ext.loc.triv.hmtp.eq:hilb} and the latter's two items for \Cref{item:th:ext.loc.triv.hmtp.eq:balg} and \Cref{item:th:ext.loc.triv.hmtp.eq:cast} respectively).
\end{th:ext.loc.triv.hmtp.eq}

\begin{remarks}\label{res:post.ext.emb}
  \begin{enumerate}[(1),wide]
  \item Note the necessity for \emph{some} constraints in the spirit of the assumed homotopy equivalence in \Cref{th:ext.loc.triv.hmtp.eq}: as observed in \cite[Remark 1.2(2)]{MR5060733}, $\cF\xrightarrowdbl{}F$ need not extend globally at all (e.g. if non-trivial with $X$ supporting only trivial bundles).

  \item\label{item:res:post.ext.emb:inv.max.cpct} There are some subtleties to contend with in the proof of \Cref{th:ext.loc.triv.hmtp.eq} in ensuring equivariant soundness, as the present observation elaborates. 

    What justifies working with \emph{compact} structure Lie groups in treating vector or matrix-algebra bundles (e.g. \cite[Assertion 18.3.4]{hjjm_bdle}, identifying principal $GL(n)$-bundles with $U(n)$-bundles) is the fact that embeddings of maximal compact subgroups of Lie groups with finitely many components are homotopy equivalences \cite[Theorem C]{zbMATH00540631}. In the context of the proof above, a vector $d'$-subbundle of a $d$-bundle means in first instance restricting the structure group to the isotropy group of a $d'$-subspace of $\bC^d$ in $\bU(d)$, which is not itself compact; therefrom, one restricts to a maximal compact subgroup invoking the same homotopy-equivalence principle.

    That this all goes through as just described $\bU$-equivariantly follows from the fact that a compact group acting on an \emph{almost-connected} (compact quotient space of components) locally compact group leaves some maximal compact subgroup invariant. This is recorded in \cite[Proposition 1.18]{2506.09642v1} with \cite[Remark 1.19]{2506.09642v1} pointing to prior sources; while presumably well-known, the statement seems to be otherwise somewhat difficult to locate in the literature in this generality.

  \item Something in the nature of the preceding remark seems to be taken for granted to a certain extent when handling maximal-compact reduction in \cite[\S 12]{MR3331607}: that paper assumes \emph{trivial} $\bU$-actions on structure groups, but \cite[Introduction]{MR3331607} asserts the material extensible to the more general setup of \cite[\S I.8]{td_alg-top}. 
  \end{enumerate}
\end{remarks}

\Cref{th:can.emb.ban.hilb} drops the vanishing constraint in \Cref{pr:2strata.triv.embed} at the cost of various topological adjustments (the homotopy equivalence avoiding the likes of \Cref{ex:pnct.sph}). 

\pf{th:can.emb.ban.hilb}
\begin{th:can.emb.ban.hilb}
  In both cases extend $\cF:=\cE|_{F}$ globally to a locally trivial $\ol{\cF}\le \cE$ by \Cref{th:ext.loc.triv.hmtp.eq}\Cref{item:th:ext.loc.triv.hmtp.eq:ban} and \Cref{item:th:ext.loc.triv.hmtp.eq:hilb} respectively, consider the orthogonal decomposition
  \begin{equation*}
    \cE=\ol{\cF}\oplus \cG
    ,\quad
    \cG|_{F}=0
  \end{equation*}
  with respect to a Hermitian structure on $\cE$ over $X\setminus F$, and embed both summands into trivial bundles: $\ol{\cF}$ by local triviality and $\cG$ by \Cref{pr:2strata.triv.embed} (given its vanishing over $F$).
\end{th:can.emb.ban.hilb}

\Cref{cor:pnct.ball.emb} does require some of the specifics of the situation it addresses: it is not difficult to alter \Cref{ex:not.emb.twisted.emb} ever so slightly so as to ensure non-embeddability into locally trivial vector bundles regardless of rank.

\begin{example}\label{ex:pnct.sph}
  The only alteration to \Cref{ex:not.emb.twisted.emb} will be to substitute for \Cref{eq:h0dd} the (still trivial) rank-$r_0$ $\cH^0$ over a punctured \emph{sphere}
  \begin{equation*}
    \bS^d_{\times}
    :=
    \bD^d_{\times}/\partial \bD^d
  \end{equation*}
  obtained by collapsing the boundary of $\bD^d$ (a $(d-1)$-sphere) to a point. The result of gluing the trivial
  \begin{equation*}
    \cH^0
    \xrightarrowdbl{\quad}
    \bS_{\times}^d
    =
    \bS^d\setminus \{p\}
    \quad\text{and}\quad
    \cH^1
    \xrightarrowdbl{\quad}
    \bD_{\le R}(p)
    :=
    \text{radius-$R$ spherical ball around $p$}
  \end{equation*}
  of respective ranks $r_0$ and $r_1$ along a continuous map
  \begin{equation*}
    \bS_{\times}^d
    \cap
    \bD_{\le R}(p)
    =
    \bD_{\le R}(p)\setminus\{p\}
    \xrightarrow{\quad\chi\quad}
    \mathrm{Iso}_{r_1\to r_0}
  \end{equation*}
  will now be a Hilbert bundle $\cH\xrightarrowdbl{\hspace{0pt}}\bS^d$ over the full sphere, with $r_0$-dimensional generic fiber and $r_1$-dimensional fiber at an exceptional point $p\in \bS^d$, image of the usual origin $0\in \bD^d$ through the boundary collapse $\bD^d\xrightarrowdbl{\hspace{0pt}}\bS^d$. It follows from \Cref{cor:susp.prcmpct} with $X:=\bS^{d-1}$ that $\cH$ cannot embed into a locally trivial vector bundle over $\bS^d$ unless $\chi$ is nullhomotopic.
\end{example}

For a topological space $X$ write
\begin{equation*}
  \Sigma X := \cC_+X\sqcup_{X\times \{0\}}\cC_-X
  ,\quad
  \cC_{\bullet}X
  :=
  X\times I_{\bullet}/X\times\left\{\bullet\right\}
  ,\quad
  \bullet\in \left\{\pm 1\right\}
\end{equation*}
for the \emph{suspension} \cite[Example 9.12(f)]{wil_top} of $X$, where $I_{\bullet}$ is the interval with endpoints $\left\{\bullet,0\right\}$ respectively (so that $\cC_{\bullet}X$ are copies of the cone on $X$). The notation $\cE_{\chi}$ of \Cref{def:char.cl} extends in obvious fashion to bundles over $\Sigma X$, with the exceptional fiber being by convention that at the tip $0_-\in \Sigma X$, image of $X\times\{-1\}$. 

\begin{corollary}\label{cor:susp.prcmpct}
  Let $X$ be a paracompact space and $\cE=\cE_{\chi}\xrightarrowdbl{\hspace{0pt}}\Sigma X$ the continuous Banach bundle over $\Bbbk\in \left\{\bR,\bC\right\}$ attached to some $\chi$ as in \Cref{eq:chi.on.cex} over the negative punctured cone $\cC_{-,\le \varepsilon,\times} X$.  

  If $\cE$ embeds into a locally trivial homogeneous bundle over $\Sigma X$ then $[[\chi]]$ is trivial. 
\end{corollary}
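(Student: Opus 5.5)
The plan is to reduce to \Cref{th:cn.ext}, applied on the negative cone $\cC_-X\subseteq \Sigma X$. That theorem says $[[\chi]]$ is trivial as soon as $\cE|_{\cC_-X}$ extends across the tip $0_-$ to a locally trivial rank-$r_0$ bundle. So the task is to turn an embedding $\cE\le\cV$, with $\cV\xrightarrowdbl{}\Sigma X$ locally trivial of rank $N$, into such a rank-$r_0$ extension, or directly into a trivialization of $\cE^0$ near the tip in which the tip frame extends.

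\textbf{Step 1: make $\cV$ globally trivial.} $\Sigma X$ is paracompact (\Cref{res:drv.spc.pcpct}), hence normal, and it is covered by the two open pieces $\cC_{\pm,<1}X$, which are contractible. Hence $\cV$ is trivial over each piece and is of finite type. By \Cref{res:phl.t312.t4}\Cref{item:res:phl.t312.t4:plus.numrbl} (numerability coming from normality, as in \Cref{pr:2strata.triv.embed}) there is a complement $\cV'$ with $\cV\oplus\cV'$ trivial. Replacing $\cV$ by $\cV\oplus\cV'$, we may assume $\cV=\Sigma X\times\Bbbk^N$.

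\textbf{Step 2: encode the embedding as maps.} The complement $\Sigma X\setminus\{0_-\}$ deformation retracts onto $\cC_+X$, so it is contractible. $\cE$ restricted there is the trivial rank-$r_0$ bundle $\cE^0$. Via the given trivialization, the embedding $\cE^0\le\cV$ becomes a map
\begin{equation*}
  \psi:\Sigma X\setminus\{0_-\}\to \mathrm{Emb}(r_0\to N),
\end{equation*}
which is nullhomotopic because its domain is contractible. In particular $\psi$ restricted to the punctured cone $\cC_{-,\le\varepsilon,\times}X$ is homotopic to the constant standard inclusion $e:\Bbbk^{r_0}\le\Bbbk^N$. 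Near the tip, the $r_1$-frame of $\cE_{0_-}$ extends locally. In the global trivialization of $\cV$ it becomes a map $\cC_{-,\le\varepsilon}X\to\mathrm{Emb}(r_1\to N)$ defined on a contractible domain, hence nullhomotopic. On the punctured part this map equals $\psi\circ\chi$. Combining the two facts, $e\circ\chi$ is nullhomotopic in $\mathrm{Emb}(r_1\to N)$.

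\textbf{Step 3: destabilize, which is the main obstacle.} From "$e\circ\chi$ is nullhomotopic in $\mathrm{Emb}(r_1\to N)$" we must pass to "$[\chi]$ lies in the $[\,\cdot\,,GL(r_0)]$-orbit of the trivial class in $[\cC_{-,\le\varepsilon,\times}X,\mathrm{Emb}(r_1\to r_0)]$". A bare homotopy-group comparison will not do this outside the stable range, so I would not rely on it.

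Instead I would keep the nullhomotopy of $\psi$ itself. Over the contractible paracompact base $\Sigma X\setminus\{0_-\}$, the rank-$r_0$ subbundle $\psi$ of the trivial $\Bbbk^N$-bundle has a complement $\cQ$. This complement is trivial, so there is a map
\begin{equation*}
  g:\Sigma X\setminus\{0_-\}\to GL(N)
\end{equation*}
carrying $\psi$ to $e$. Transport the tip frame through $g$ and project it onto $\Bbbk^{r_0}$ along $\cQ$. This should produce a rank-$r_0$ locally trivial extension $\cE'$ of $\cE|_{\cC_-X}$ across $0_-$: the fiber at $0_-$ is the $r_0$-plane that $g$ and the tip frame determine in the limit, and continuity of $g$ near the tip has to be arranged by shrinking $\varepsilon$.

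If that limit control fails, the fallback is to work with the orbit action directly. The relevant map is
\begin{equation*}
  h=g|_{\cC_{-,\le\varepsilon,\times}X}\ \text{compressed to}\ GL(r_0),
\end{equation*}
and the goal is to show that $h\cdot[\chi]$ is trivial, arguing as in the proof of \Cref{th:cn.ext}($\Rightarrow$). I expect the delicate point to be exactly this compression, from $GL(N)$ acting on $\mathrm{Emb}(r_1\to N)$ down to $GL(r_0)$ acting on $\mathrm{Emb}(r_1\to r_0)$. Once it is secured, \Cref{th:cn.ext} gives triviality of $[[\chi]]$.
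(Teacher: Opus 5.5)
Your Steps 1 and 2 are sound and match the opening of the paper's argument: make the ambient bundle trivial, then read the rank-$r_0$ part as a map on the contractible $\Sigma X\setminus\{0_-\}$. You are also right that their output, nullhomotopy of $e\circ\chi$ in $\mathrm{Emb}(r_1\to N)$, is too weak. The gap is Step 3, and it cannot be filled. Your ``transport and project'' construction needs $g$, equivalently the $r_0$-plane field $x\mapsto \psi(x)\Bbbk^{r_0}$, to have a limit at $0_-$. Nothing forces this, and shrinking $\varepsilon$ does not help. Your fallback compression from $GL(N)$ to $GL(r_0)$ is exactly the destabilization you admit you cannot perform.

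The paper's proof passes over the same point. It homotopes $\Sigma X\setminus\{0_-\}\to\bG(r_0,r)$ to a constant ``without affecting'' the $\bG(r_1,r)$-valued map near the tip. Over the punctured cone, such a homotopy must keep each $r_0$-plane containing the fixed $r_1$-plane, i.e. it must run in the sub-Grassmannian of planes through that $r_1$-plane. Contractibility of the domain does not supply such a homotopy.

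In fact the statement fails once the ambient rank exceeds $r_0$; for ambient rank $r_0$ it reduces to \Cref{th:cn.ext}. Here is a counterexample.
\begin{itemize}
\item Setup: take $\Bbbk=\bR$, $X=\bS^2$ (so $\Sigma X\cong\bS^3$), $r_0=3$, $r_1=1$, and $\chi(u,t)=u\in\bR^3\setminus\{0\}=\mathrm{Emb}(1\to 3)$ for $(u,t)\in\bS^2\times(0,\varepsilon]\cong\cC_{-,\le\varepsilon,\times}X$.
\item $[[\chi]]$ is non-trivial: each component of $GL(3,\bR)$ is homotopy equivalent to $SO(3)$, and $\pi_2(SO(3))=0$. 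So $[\bS^2,GL(3,\bR)]$ consists only of the classes of $1$ and of a reflection. These act on $[\bS^2,\bR^3\setminus\{0\}]\cong\bZ$ (degree) by $n\mapsto\pm n$, so $[[\chi]]=\{\pm1\}$.
\item The embedding: write $\bR^4=\bR w_0\oplus\bR^3$. The isometries
\begin{equation*}
  \psi(u)v:=\langle v,u\rangle w_0+\left(v-\langle v,u\rangle u\right)
  ,\quad
  u\in\bS^2,
\end{equation*}
satisfy $\psi(u)u=w_0$. Put $\Phi_{(u,t)}:=\psi(u)$ on the punctured cone and $\Phi_{0_-}(c):=cw_0$. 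Extend $\Phi$ over the remaining closed $3$-cell of $\Sigma X\setminus\{0_-\}$, which is possible because $\pi_2(\mathrm{Iso}_{3\to 4})=\pi_2(SO(4))=0$.
\item Verification: the sections $c\chi$ through the tip go to the constant $cw_0$, so $\Phi$ is continuous at $0_-$. Being fiberwise isometric, $\Phi$ embeds $\cE_\chi$ into $\bS^3\times\bR^4$.
\end{itemize}

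This example shows exactly where your Step 3 breaks. The plane field $\psi(u)\bR^3=\bR w_0\oplus u^{\perp}$ has no limit at the tip, independently of $\varepsilon$. The map $u\mapsto\psi(u)\bR^3$ is null in $\bG(3,4)\cong\bR P^3$. In the sub-Grassmannian of $3$-planes containing $w_0$ ($\cong\bR P^2$), however, it is the essential double cover $\bS^2\to\bR P^2$.

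This is consistent with \Cref{cor:pnct.ball.emb}. Its embedding over a small ball around the exceptional point extends over the complementary cell of $\bS^d$ once $N\ge r_0+d$, since then $\pi_{d-1}(\mathrm{Iso}_{r_0\to N})=0$. The same example also contradicts the conclusion drawn in \Cref{ex:pnct.sph}.
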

\begin{proof}
  Consider such an embedding $\cE\le \cE'$, with the ambient $\cE'$ of rank $r$ and trivial (which we may as well assume). The embedding $\cE^0\le \cE'|_{\Sigma_{\times} X}$ gives a continuous map
  \begin{equation*}
    \Sigma X\setminus \left\{0_-\right\}
    =:
    \Sigma_{\times} X
    \xrightarrow{\quad}
    \bG\left(r_0,r\right)
  \end{equation*}
  into the Grassmannian of $r_0$-planes in $\bR^r$; the domain being contractible, that map can be homotoped into a constant one without affecting the analogous 
  \begin{equation*}
    \cC_{-,\le \varepsilon}X
    \xrightarrow{\quad}
    \bG\left(r_1,r\right)
  \end{equation*}
  (which we can assume constant, having fixed a trivialization of $\cE^1$). This affords the extension of $\cE^0$ across all of $\Sigma X$ to a (trivial) rank-$r_0$ subbundle of the ambient $\cE'$, so we are back within the scope of \Cref{ex:not.emb.twisted.emb}.
\end{proof}

%%%%%%%%%%%%%%%%%%%%%%%%%%%%%%%%
%%%%%%%%%%%%%%%%%%%%%%%%%%%%%%%%
\section{On and around $C^*$-bundle homogeneity}\label{se:cast.emb}

The $C^*$-bundle version of \Cref{th:can.emb.ban.hilb} will not quite go through. In reference to \Cref{pr:cast.need.trace} below attesting this, recall \cite[Definition II.6.10.1 and Theorem II.6.10.2]{blk_oa} that an \emph{expectation} $A\xrightarrowdbl{E}B$ of a $C^*$-algebra onto a $C^*$-subalgebra is a norm-1 idempotent (or rather the corestriction of one); it will then automatically be completely positive and a $B$-bimodule morphism. 

Much is made below of (equivariant) \emph{numerability} for locally trivial bundles$_{\bU}$ $\cE\xrightarrowdbl{}X$ in the sense of \cite[(I.6.4), (I.6.5), post Lemma I.8.9 and Definition I.9.1]{td_alg-top}:
\begin{itemize}[wide]
\item equivariantly \emph{trivializable} over $\cU=\left(U_j\right)_j$ of an open cover $X=\bigcup_j U_j$, i.e. with the restrictions $\cE|_{U_j}$ admitting bundle$_{\bU}$ maps into \emph{local objects} (bundles of the form $\bU\times_{\bK}V$ for $\bK\le \bU$ and a $\bK$-representation $V$ in the case of vector bundles, analogues for matrix bundles, etc.);
\item with $U_j$ admitting a $\bU$-invariant subordinate partition of unity. 
\end{itemize}
\emph{Finite} numerability simply means that the cover $\cU$ can be chosen finite.

\begin{proposition}\label{pr:cast.need.trace}
  \begin{enumerate}[(1),wide]
  \item\label{item:pr:cast.need.trace:prcpct} Let $\bU$ be a compact Lie group acting on the paracompact space $X$. The bounded-section $C^*$-algebra $A:=\Gamma_b(\cA)$ of a subhomogeneous continuous $C^*$ bundle$_{\bU}$ $\cA\xrightarrowdbl{}X$ equivariantly embeddable into a locally trivial one admits a $\bU$-equivariant tracial faithful expectation
    \begin{equation*}
      A
      \xrightarrowdbl{\quad E\quad}
      C_b(X)
      :=
      \text{algebra of complex bounded functions on $X$}.
    \end{equation*}

  \item\label{item:pr:cast.need.trace:nrml} If $\cA$ is of finite type it suffices to assume $X$ normal. 
  \end{enumerate}
\end{proposition}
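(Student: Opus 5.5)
Suppose $\cA\le\cB$ is an equivariant embedding into a locally trivial $C^*$ bundle$_{\bU}$ $\cB\xrightarrowdbl{}X$. The plan is to build a fiberwise normalized trace on $\cB$, restrict it to $\cA$, and glue by partitions of unity. Because $\cA$ is subhomogeneous, I would first arrange that $\cB$ may be taken subhomogeneous and locally trivial with fiber $\bigoplus_i M_{n_i}$. Failing that, I would work directly with the local trivializations $\cB|_{U_j}\cong U_j\times B$ (or local objects $\bU\times_{\bK}B$ in the equivariant sense) and use only that $\cA$ sits inside.

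The central step is to pick, on each trivializing open set $U_j$, a faithful tracial state $\tau_j$ on the relevant fibers that is $\bK$-invariant for the local object. For subhomogeneous fibers $\bigoplus M_{n_i}$, the state $\tau_j=\sum_i \lambda_i\,\mathrm{tr}_{n_i}$ with $\lambda_i>0$ is faithful and tracial. Averaging over the compact group $\bK$ (Haar measure) preserves faithfulness and traciality, since positivity and the trace property are preserved under convex combinations of automorphism-pullbacks. Transporting $\tau_j$ through the trivialization then gives a continuous field $x\mapsto\tau_{j,x}$ of faithful tracial states on $\cB_x$ for $x\in U_j$. Continuity holds because the fiber algebra is fixed in the trivialization.

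Next I would glue. Paracompactness in \Cref{item:pr:cast.need.trace:prcpct}, or normality plus finiteness of the cover in \Cref{item:pr:cast.need.trace:nrml}, furnishes a $\bU$-invariant partition of unity $(\varphi_j)$ subordinate to $(U_j)$; this uses \cite[Proposition I.3.12]{td_transf-gp} and the argument in the proof of \Cref{pr:2strata.triv.embed}. Set $\tau_x:=\sum_j\varphi_j(x)\tau_{j,x}$. This is a convex combination of faithful tracial states, hence faithful and tracial, and it depends continuously on $x$ on sections. Define $E(s)(x):=\tau_x(s(x))$ for $s\in\Gamma_b(\cA)$. This gives a bounded continuous function, so $E$ maps $A$ into $C_b(X)$. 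It is unital, positive, and $C_b(X)$-linear, hence an idempotent of norm one onto $C_b(X)\cdot 1\le A$. It is tracial since each $\tau_x$ is. It is faithful: $E(s^*s)=0$ forces $s(x)=0$ for all $x$. Equivariance follows from the $\bU$-invariance of the $\varphi_j$ together with compatibility of the $\tau_{j}$ with the action. If needed, one can also average the whole field $x\mapsto\tau_x$ over $\bU$, i.e. replace $\tau_x$ by $\int_{\bU} g^{-1}\cdot\tau_{gx}\,dg$, which preserves all the properties.

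I expect the main obstacle to be the equivariant local step. Within a local object $\bU\times_{\bK}B$, the traces must be defined consistently along orbits, which is why $\bK$-averaging is needed. In addition, if $\cB$ has infinite-dimensional fibers, a faithful trace need not exist. So I would check that the embedding can be taken into a locally trivial bundle with finite-dimensional fiber, or else replace $\cB$ by a subhomogeneous locally trivial bundle containing $\cA$. This reduction is the point I am least sure about, and I would try to extract it from the subhomogeneity of $\cA$ via a conditional-expectation argument on the fibers of $\cB$.
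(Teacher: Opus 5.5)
Your argument is correct under the reading the paper intends. In the statement, ``a locally trivial one'' refers back to ``subhomogeneous continuous $C^*$ bundle$_{\bU}$'', so the ambient $\cB$ is locally trivial \emph{and} subhomogeneous, with fibers $\bigoplus_i M_{n_i}$.

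Your route differs from the paper's in how the fiberwise trace is obtained. The paper does not build one by hand. It takes the canonical tracial finite-index expectation $\Gamma_b(\cB)\to C_b(X)$ of \cite[Proposition 3.4]{bg_cx-exp} and notes that its canonical nature makes it $\bU$-equivariant for free. It then restricts this expectation to $A$, as in \cite[Corollary 3.5]{bg_cx-exp}. Part (2) is then just the observation that finite covers of normal spaces are numerable.

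You instead pick arbitrary faithful traces in local trivializations, average them over $\bK$, and glue with a $\bU$-invariant partition of unity. This is self-contained and shows exactly where paracompactness or normality is used, at the cost of the averaging and gluing bookkeeping. You could get the paper's canonicity directly by letting the weights depend only on block size. For instance, take $\tau_x(b):=\mathrm{Tr}(L_b)/\dim\cB_x$, where $L_b$ is left multiplication on $\cB_x$; this amounts to $\lambda_i\propto n_i^2$. Since $L_{\phi(b)}=\phi L_b\phi^{-1}$ for every algebra isomorphism $\phi$ between fibers, this trace is preserved by all fiber isomorphisms. It is therefore continuous on sections by local triviality alone and automatically $\bU$-equivariant, with no averaging or gluing needed.

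On the point you were least sure about: reducing an arbitrary locally trivial ambient bundle to a subhomogeneous one is not only unnecessary but impossible in general. By \Cref{re:homog.obstruct}, separable subhomogeneous continuous fields over compact metrizable bases embed into trivial $\cO_2$-bundles \cite[Theorem A.1]{MR1461207}. On the other hand, the field of \cite[Example 2.2]{CHIRVASITU2026130746} over $[-1,1]$ (fiber $M_3$ away from $0$, $\bC^2$ at $0$) carries no tracial expectation, as noted right after the proposition. So the statement is false if the ambient fibers may be infinite-dimensional, and no fiberwise conditional-expectation argument can rescue that version.

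Also, in part (2) the finite cover you glue over must trivialize the bundle that carries the traces, so take $\cB=\cA$ there. A finite trivializing cover of $\cA$ says nothing about an arbitrary ambient $\cB$.
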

\begin{proof}

  \begin{enumerate}[(1),wide]
  \item Consider an embedding
    \begin{equation*}
      \left(\cA\xrightarrowdbl{\quad} X\right)
      \lhook\joinrel\xrightarrow{\quad}
      \left(\cB\xrightarrowdbl{\quad} X\right)
    \end{equation*}
    into a locally trivial subhomogeneous $C^*$ bundle. The latter's $C^*$-algebra of bounded sections carries the canonical (tracial, \emph{finite-index} \cite[Definition 2]{fk_fin-ind}) expectation $\Gamma_b(\cB)=:B\xrightarrowdbl{T}C_b(X)$ constructed in \cite[Proposition 3.4]{bg_cx-exp} (over compact Hausdorff $X$, but the construction goes through in the present generality). The canonical nature of that construction also ensures $\bU$-equivariance, and this suffices for \cite[Corollary 3.5]{bg_cx-exp} (also stated there for compact Hausdorff base spaces) to go through: restrict that expectation back to $A$ along $A\lhook\joinrel\to B$.

  \item All of the above goes through so long as we have trivialized the bundle over a numerable cover; as recalled in \Cref{res:phl.t312.t4}\Cref{item:res:phl.t312.t4:nrm.equiv}, a space is normal precisely when its finite (or equivalently, binary) open covers are numerable. 
  \end{enumerate}
\end{proof}

The reason why, as anticipated, \Cref{pr:cast.need.trace} rules out transporting \Cref{th:can.emb.ban.hilb} straightforwardly to the $C^*$ setting is that there are simple examples of continuous $C^*$ bundles meeting all of the topological requirements but failing to carry the necessary tracial expectation: \cite[Example 2.2]{CHIRVASITU2026130746} gives one such bundle over $[-1,1]$ for instance, with generic fiber $M_3$ away from 0 and $\bC^2$ at 0.

\begin{remark}\label{re:emb.in.triv.mn}

  In reference to \Cref{pr:cast.need.trace}\Cref{item:pr:cast.need.trace:nrml}, recall that locally trivial matrix-algebra bundles over normal spaces always embed into \emph{trivial} such: this is the multiplicative analogue of \cite[Proposition I.9.7]{td_transf-gp}, and is proven as part of \cite[Proposition 2.9]{zbMATH05172034} subject to the caveat raised in \Cref{res:phl.t312.t4}\Cref{item:res:phl.t312.t4:gap}: the cited result assumes only the $T_{3\frac 12}$ separation axiom, whereas normality seems to be required (or at least not obviously removable).
\end{remark}

It will be of some use to record the following consequence of \Cref{pr:cast.need.trace}'s proof (though not quite of its statement).

\begin{proposition}\label{pr:emb.mtrx}
  Let $\bU$ be a compact Lie group.
  \begin{enumerate}[(1),wide]
  \item\label{item:pr:emb.mtrx:emb.mat} A numerable locally trivial subhomogeneous $C^*$ bundle$_{\bU}$ over a $T_{3\frac 12}$ space $X$ embeds into a locally trivial matrix bundle$_{\bU}$.

  \item\label{item:pr:emb.mtrx:fin} If the original bundle$_{\bU}$ is finitely numerable the matrix bundle$_{\bU}$ can be chosen so as well. 
  \end{enumerate}  
\end{proposition}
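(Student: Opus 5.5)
The plan is to realize the given bundle $\cA$ fiberwise as a subalgebra of a matrix algebra via the canonical faithful trace, and then globalize. Write the fibers as finite-dimensional $C^*$-algebras $A_x\cong\bigoplus_i M_{n_i}$. Local triviality gives these fibers locally constant isomorphism type, so $X$ splits into clopen $\bU$-invariant pieces. Only finitely many types occur on each piece, by subhomogeneity. Each piece can be handled separately, and the resulting matrix bundles can then be assembled into one of a common size by adding trivial unital corners if desired. We may thus assume $\cA$ is homogeneous with fiber $A=\bigoplus_i M_{n_i}$. Its structure group is $\mathrm{Aut}(A)$: a compact Lie group, an extension of a permutation group of equal-size blocks by $\prod_i PU(n_i)$.

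The key step is a representation $\rho:A\to M_N$ that is equivariant for $\mathrm{Aut}(A)$ up to inner automorphisms of $M_N$. In other words, we want a homomorphism $\mathrm{Aut}(A)\to PU(N)$ intertwining the two actions; this is enough to push the bundle forward. I would take the GNS representation of the canonical faithful trace $\tau=\sum_i \mathrm{tr}_{M_{n_i}}$, which is $\mathrm{Aut}(A)$-invariant. This gives $A$ acting on $L^2(A,\tau)\cong A$ by left multiplication, so $N=\dim A$. Every $\alpha\in\mathrm{Aut}(A)$ preserves $\tau$, so it induces a unitary $u_\alpha$ on $L^2(A,\tau)$ with $u_\alpha\lambda(a)u_\alpha^*=\lambda(\alpha a)$. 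Thus $\alpha\mapsto u_\alpha$ is a genuine continuous homomorphism $\mathrm{Aut}(A)\to U(N)$, and the embedding $\lambda:A\to \End(A)=M_N$ is $\mathrm{Aut}(A)$-equivariant. This is the fiberwise version of the tracial expectation used in the proof of \Cref{pr:cast.need.trace}. Concretely, the matrix bundle is $\End(\cA)$, the endomorphism bundle of the underlying Hilbert bundle $\cA$ with fiberwise inner product $\langle a,b\rangle_x=\tau_x(a^*b)$. The algebra $\cA$ embeds into it by left multiplication. The $\bU$-action on $\End(\cA)$ is conjugation by the given action on $\cA$, which is unitary because $\bU$ acts by $*$-automorphisms and hence preserves the canonical trace.

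It then remains to check that $\End(\cA)$ is a locally trivial matrix bundle$_{\bU}$ with the same numerability properties, and that $\lambda$ is continuous. Here one has to be careful about what local objects mean equivariantly, namely bundles of the form $\bU\times_{\bK}V$. Over a local object $\bU\times_{\bK}A$, with $\bK$ acting on $A$ through $\mathrm{Aut}(A)$, the endomorphism bundle is $\bU\times_{\bK}\End(A)$. Here $\bK$ acts through the homomorphism $\mathrm{Aut}(A)\to U(N)\to\mathrm{Aut}(M_N)$, so this is again a local object. Hence any (finite) numerable trivializing cover for $\cA$ is also one for $\End(\cA)$, which gives both \Cref{item:pr:emb.mtrx:emb.mat} and \Cref{item:pr:emb.mtrx:fin} simultaneously.

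The main obstacle I expect is the continuity of the fiberwise trace $x\mapsto\tau_x$ across trivializations; that is, that the Hilbert structure $\langle\cdot,\cdot\rangle_x$ is a continuous Hilbert-bundle structure. This should follow from canonicity: in a local chart the trace is the fixed $\tau$, and transition maps preserve it. Numerability would only be needed if one instead averaged inner products via partitions of unity, for instance if one wished to reach trivial rather than locally trivial matrix bundles as in \Cref{re:emb.in.triv.mn}. The $T_{3\frac 12}$ hypothesis enters only implicitly through the ambient conventions.
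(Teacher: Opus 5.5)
Your proposal is correct and is essentially the paper's argument. The paper likewise uses the canonical, automorphism-invariant tracial expectation of \Cref{pr:cast.need.trace} to make each fiber $\cA_x$ a Hilbert space, and glues the resulting GNS (left-regular) representations into the equivariant embedding $\cA\hookrightarrow \cE nd~\cA$, with canonicity giving equivariance and (finite) numerability left undisturbed. Your explicit homomorphism $\mathrm{Aut}(A)\to U(N)$ and the local-object check $\bU\times_{\bK}A\mapsto \bU\times_{\bK}\End(A)$ spell out what the paper leaves implicit, and your preliminary splitting into homogeneous pieces is harmless but not needed.
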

\begin{proof}
  \begin{enumerate}[label={},wide]
  \item\textbf{\Cref{item:pr:emb.mtrx:emb.mat}} The canonical expectation $\Gamma_b(\cA)\xrightarrowdbl{E}C_b(X)$ of \Cref{pr:cast.need.trace} affords faithful GNS representations $\cA_x\lhook\joinrel\to \cL(\cA_x)$ at each $x\in X$: $\cA_x$ is here regarded as a Hilbert space equipped with its structure induced by the tracial state $\cA_x\xrightarrow{E_x}\bC$. Said GNS representations will glue to the desired embedding, equivariant because everything else is ($E$ inclusive, given its canonical nature).

  \item\textbf{\Cref{item:pr:emb.mtrx:fin}} The preceding argument does not disturb finite numerability. 
  \end{enumerate}
\end{proof}

\cite[Proposition 2.9, (1) $\Leftrightarrow$ (2)]{zbMATH05172034} reduces the finite-type condition for matrix bundles to the same for the underlying vector bundle; the canonical-expectation machinery developed in \cite{bg_cx-exp} and employed above disposes of that reduction (equivariantly) fairly quickly, per \Cref{th:ft.emb.ft}. 

\begin{theorem}\label{th:ft.emb.ft}
  The following conditions on a locally trivial subhomogeneous $C^*$ bundle$_{\bU}$ $\cA\xrightarrowdbl{}X$ over a normal $\bU$-space are equivalent. 
  \begin{enumerate}[(a),wide]
  \item\label{item:th:ft.emb.ft:ft} $\cA$ is finite-type.

  \item\label{item:th:ft.emb.ft:emb.ft.mtrx} $\cA$ embeds into a finite-type locally trivial subhomogeneous matrix bundle$_{\bU}$.
    
  \item\label{item:th:ft.emb.ft:emb.ft} $\cA$ embeds into a finite-type locally trivial subhomogeneous $C^*$ bundle$_{\bU}$.

  \item\label{item:th:ft.emb.ft:ft.vb} $\cA$ is finite-type as a vector bundle$_{\bU}$.
  \item\label{item:th:ft.emb.ft:emb.ft.vb} $\cA$ embeds into a finite-type vector bundle$_{\bU}$. 
  \end{enumerate}
\end{theorem}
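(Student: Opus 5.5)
I will prove the cycle of implications
\begin{equation*}
  \text{\Cref{item:th:ft.emb.ft:ft}}\Rightarrow\text{\Cref{item:th:ft.emb.ft:emb.ft.mtrx}}\Rightarrow\text{\Cref{item:th:ft.emb.ft:emb.ft}}\Rightarrow\text{\Cref{item:th:ft.emb.ft:emb.ft.vb}}\Rightarrow\text{\Cref{item:th:ft.emb.ft:ft.vb}}\Rightarrow\text{\Cref{item:th:ft.emb.ft:ft}}.
\end{equation*}

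\textbf{The first implications.} For \Cref{item:th:ft.emb.ft:ft} $\Rightarrow$ \Cref{item:th:ft.emb.ft:emb.ft.mtrx}, I use normality of $X$: a finite trivializing cover admits a subordinate $\bU$-invariant partition of unity (\Cref{res:phl.t312.t4}\Cref{item:res:phl.t312.t4:nrm.equiv}, equivariantized as in the proof of \Cref{pr:2strata.triv.embed}). Thus $\cA$ is finitely numerable, and \Cref{pr:emb.mtrx}\Cref{item:pr:emb.mtrx:fin} produces a finitely numerable, hence finite-type, matrix bundle$_{\bU}$ containing $\cA$. The implication \Cref{item:th:ft.emb.ft:emb.ft.mtrx} $\Rightarrow$ \Cref{item:th:ft.emb.ft:emb.ft} is formal. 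For \Cref{item:th:ft.emb.ft:emb.ft} $\Rightarrow$ \Cref{item:th:ft.emb.ft:emb.ft.vb}, I forget the multiplicative structure: a $C^*$ trivialization over an open set is in particular a vector-bundle trivialization, so the ambient bundle is finite-type as a vector bundle$_{\bU}$.

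\textbf{The vector-bundle step.} For \Cref{item:th:ft.emb.ft:emb.ft.vb} $\Rightarrow$ \Cref{item:th:ft.emb.ft:ft.vb}, suppose $\cA\le\cV$ with $\cV$ finite-type. By normality (again via \Cref{res:phl.t312.t4}), $\cV$ is finitely numerable, so \cite[Proposition I.9.7]{td_transf-gp} gives $\cW$ with $\cV\oplus\cW$ equivariantly trivial. Choose a $\bU$-invariant Hermitian metric on $\cV$ using the finite partition of unity. Then $\cA\oplus(\cA^{\perp}\oplus\cW)$ is trivial, so $\cA$ has a complement in a trivial bundle. The unconditional direction of \cite[Proposition I.9.7]{td_transf-gp} (cf. \Cref{res:phl.t312.t4}\Cref{item:res:phl.t312.t4:plus.numrbl}) then makes $\cA$ finite-type as a vector bundle$_{\bU}$.

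\textbf{The return to the $C^*$ structure.} The remaining step, \Cref{item:th:ft.emb.ft:ft.vb} $\Rightarrow$ \Cref{item:th:ft.emb.ft:ft}, is the main obstacle, since vector-bundle trivializations say nothing a priori about the multiplication. I would work with the finite-type vector bundle $\cA$ and the canonical tracial expectation $E$ of \Cref{pr:cast.need.trace}. This expectation exists because $\cA$ is locally trivial over a normal space, which I take to yield the numerability \Cref{pr:cast.need.trace} needs.
\begin{itemize}[wide]
\item Using $E$, I make $\cA$ a Hilbert bundle, and the left-regular GNS representation embeds $\cA$ multiplicatively into the matrix bundle $\End(\cA)$.
\item $\End(\cA)\cong\cA\otimes\cA^{*}$ is finite-type as a vector bundle, hence, by normality and \cite[Proposition I.9.7]{td_transf-gp}, embeds as a corner of a trivial matrix bundle $X\times M_N$ (the multiplicative analogue of \Cref{re:emb.in.triv.mn}).
\item The embedding $\cA\le X\times M_N$ is then classified by an equivariant map from $X$ into the space of $C^*$-subalgebras of $M_N$ isomorphic to the fibers of $\cA$. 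Each isomorphism type occurring gives a homogeneous space $\bU(N)/\mathrm{Stab}$.
\end{itemize}
Finite type should then follow by pulling back a finite trivializing cover of these homogeneous spaces, which are compact, under an equivariant map from $X$.

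The delicate points are two. First, the fiber type may vary across the clopen pieces of $X$ where $\cA$ is homogeneous, but subhomogeneity leaves only finitely many types, so this is harmless. Second, the relevant cover of each orbit space $\bU(N)/\mathrm{Stab}$ must consist of equivariant local objects, which I would take from the equivariant slice theorem.
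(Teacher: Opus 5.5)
Your proposal is correct and is essentially the paper's argument. Both use the canonical tracial expectation with its GNS embedding $\cA\hookrightarrow\cE nd~\cA$, reduce to an embedding into a trivial Hilbert/matrix bundle$_{\bU}$, and pull back a tautological bundle from a compact $\bU$-manifold (a Grassmannian or the space of $C^*$-subalgebras); the differences are organizational: you close the cycle through the formal (c)$\Rightarrow$(e) rather than (c)$\Rightarrow$(b) via \Cref{pr:emb.mtrx}, and you trivialize the ambient bundle globally by a complement rather than locally on closures of a shrunken finite cover.

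One justification needs tightening: your appeal to \Cref{pr:cast.need.trace} at the (d)$\Rightarrow$(a) stage. Normality only makes locally finite covers numerable, so it does not by itself supply that proposition's hypotheses. They are not actually needed, though, because the canonical expectation can be defined fiberwise via an automorphism-invariant faithful trace, and its continuity is checked locally in trivializations (the paper's own proof leans on the same point).
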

\begin{proof}
  Condition \Cref{item:th:ft.emb.ft:ft} is formally stronger than all others except for \Cref{item:th:ft.emb.ft:emb.ft.mtrx}, which \Cref{pr:emb.mtrx}\Cref{item:pr:emb.mtrx:fin} handles instead (along with \Cref{item:th:ft.emb.ft:emb.ft.mtrx} $\Leftrightarrow$ \Cref{item:th:ft.emb.ft:emb.ft}); thus: \Cref{item:th:ft.emb.ft:ft} implies everything else.

  \begin{enumerate}[label={},wide]
  \item \textbf{\Cref{item:th:ft.emb.ft:emb.ft.mtrx} $\Rightarrow$ \Cref{item:th:ft.emb.ft:ft}:} Consider an embedding $\cA\lhook\joinrel\to \cB$ as stated, with $\cB$ trivialized by a finite open $\bU$-cover $\cU=(U_j)_j$. Shrinking $U_j$ to an open $\bU$-cover $\cV=\left(V_j\right)_{j}$, $\overline{V_j}\subseteq U_j$ (as always possible on a normal space \cite[proof of Theorem 36.1]{mnk}), we can restrict attention to a single (again normal) $\overline{V_j}$ and assume embeddability into a \emph{trivial} (rather than just finite-type) bundle$_{\bU}$.
    
    The conclusion now follows as in the argument supporting \cite[Proposition I.9.7]{td_transf-gp}: an embedding $\cA\lhook\joinrel\to X\times M_n$ realizes $\cA$ as a pullback of the tautological bundle over the $\bU$-manifold of $C^*$-subalgebras of $M_n$, and said bundle is of finite type (the base space being a compact $\bU$-manifold).
    
  \item \textbf{\Cref{item:th:ft.emb.ft:emb.ft.vb} $\Rightarrow$ \Cref{item:th:ft.emb.ft:ft.vb}:} entirely analogous to the preceding argument, so omitted. 

  \item \textbf{\Cref{item:th:ft.emb.ft:ft.vb} $\Rightarrow$ \Cref{item:th:ft.emb.ft:ft}:} Consider the expectation $\Gamma_b(\cA)\xrightarrowdbl{E}C_b(X)$ of \Cref{pr:cast.need.trace}, with its attendant $X$-global GNS representation
    \begin{equation}\label{eq:a2enda}
      \left(\cA\xrightarrowdbl{\quad}X\right)
      \lhook\joinrel\xrightarrow[\quad]{\quad\text{$C^*$-bundle$_{\bU}$ morphism}\quad}
      \left(\cE nd~\cA\xrightarrowdbl{\quad}X\right).
    \end{equation}
    The latter bundle$_{\bU}$ has finite type by assumption (for $\cA$ does, as a vector bundle$_{\bU}$). That $\cA$ is finite-type as a $C^*$ bundle$_{\bU}$ now follows from the already-settled implication \Cref{item:th:ft.emb.ft:emb.ft.mtrx} $\Rightarrow$ \Cref{item:th:ft.emb.ft:ft}.
  \end{enumerate}
\end{proof}

Another aspect of \cite[Proposition 2.9]{zbMATH05172034} is its characterization of finite-type matrix bundles as precisely those that extend across the \emph{Stone-\v{C}ech compactification} \cite[Definition 19.4]{wil_top} $X\lhook\joinrel\to \beta X$. This too has its equivariant counterpart, valid moreover for arbitrary locally trivial subhomogeneous $C^*$ bundles$_{\bU}$. In preparation for the statement, we remind the reader that $T_{3\frac 12}$ spaces equipped with an action by a compact (indeed, locally compact, perhaps non-Lie) group $\bU$ admit universal \emph{equivariant compactifications} $X\lhook\joinrel\to \beta_{\bU}X$ (\cite[Proposition 3.1]{dvr-ex}, \cite[\S 2.4]{dvr_puc_1977}, the recent \cite{megr_max-equiv-cpct} and its numerous references, etc.): $\beta_{\bU}$ is a left adjoint to the inclusion functor
\begin{equation*}
  \left(\text{compact $T_2$ $\bU$-spaces}\right)
  \lhook\joinrel\xrightarrow{\quad}
  \left(\text{$T_{3\frac 12}$ $\bU$-spaces}\right),
\end{equation*}
with the components $X\lhook\joinrel\to \beta_{\bU}X$ of the adjunction's unit embeddings recovering $X$ with the subspace topology.

\begin{theorem}\label{th:ft.ext.cpct}
  Let $\bU$ be a compact Lie group and $X$ a normal $\bU$-space. The following conditions on a locally trivial subhomogeneous $C^*$ bundle$_{\bU}$ $\cA\xrightarrowdbl{}X$ are equivalent.
  \begin{enumerate}[(a),wide]

  \item\label{item:th:ft.ext.cpct:some.mfld} $\cA$ pulls back from an equivariant map to a compact $C^{\infty}$ $\bU$-manifold.
    
  \item\label{item:th:ft.ext.cpct:some.cpct} $\cA$ pulls back from an equivariant map to a compact Hausdorff $\bU$-space.
    
  \item\label{item:th:ft.ext.cpct:beta} $\cA$ extends across the universal $\bU$-equivariant compactification $X\lhook\joinrel\to \beta_{\bU}X$.

  \item\label{item:th:ft.ext.cpct:some.cpctf} $\cA$ extends across some $\bU$-equivariant compactification $X\lhook\joinrel\to Y$.

  \item\label{item:th:ft.ext.cpct:ft} $\cA$ is of finite type. 
  \end{enumerate}
\end{theorem}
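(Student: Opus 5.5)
I would prove the cycle \Cref{item:th:ft.ext.cpct:some.mfld} $\Rightarrow$ \Cref{item:th:ft.ext.cpct:some.cpct} $\Rightarrow$ \Cref{item:th:ft.ext.cpct:beta} $\Rightarrow$ \Cref{item:th:ft.ext.cpct:some.cpctf} $\Rightarrow$ \Cref{item:th:ft.ext.cpct:ft} $\Rightarrow$ \Cref{item:th:ft.ext.cpct:some.mfld}.

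The first implication is trivial. For \Cref{item:th:ft.ext.cpct:some.cpct} $\Rightarrow$ \Cref{item:th:ft.ext.cpct:beta}, suppose $\cA\cong f^*\cB$ for a $\bU$-map $f\colon X\to K$ into a compact Hausdorff $\bU$-space. The universal property of $\beta_{\bU}$ (as a left adjoint) factors $f$ as $X\lhook\joinrel\to\beta_{\bU}X\xrightarrow{\tilde f}K$. Then $\tilde f^*\cB$ is a locally trivial subhomogeneous $C^*$ bundle$_{\bU}$ over $\beta_{\bU}X$ restricting to $\cA$. The implication \Cref{item:th:ft.ext.cpct:beta} $\Rightarrow$ \Cref{item:th:ft.ext.cpct:some.cpctf} is again trivial.

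For \Cref{item:th:ft.ext.cpct:some.cpctf} $\Rightarrow$ \Cref{item:th:ft.ext.cpct:ft}, let $\ol{\cA}\xrightarrowdbl{}Y$ be a locally trivial extension. Each point $y\in Y$ has a neighbourhood trivializing $\ol{\cA}$ equivariantly. I will use the local-object formulation of equivariant local triviality, with $\bU$-invariant tube neighbourhoods around orbits, available because $\bU$ is compact Lie and $Y$ is compact Hausdorff. By compactness of $Y$ finitely many such invariant opens cover it. Intersecting them with $X$ gives a finite invariant trivializing cover of $\cA$.

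The substantive step is \Cref{item:th:ft.ext.cpct:ft} $\Rightarrow$ \Cref{item:th:ft.ext.cpct:some.mfld}. Since $X$ is normal, the finite trivializing cover is numerable, as in \Cref{res:phl.t312.t4}\Cref{item:res:phl.t312.t4:nrm.equiv}, via invariant partitions of unity obtained by averaging over $\bU$. \Cref{th:ft.emb.ft}, in the form of \Cref{pr:emb.mtrx}, then embeds $\cA$ into a finite-type locally trivial matrix bundle$_{\bU}$ $\cB$. The multiplicative analogue of \cite[Proposition I.9.7]{td_transf-gp} (cf. \Cref{re:emb.in.triv.mn}) embeds $\cB$ further into a trivial bundle $X\times M_n$, with $\bU$ acting on $M_n$ through a unitary representation; the finite numerable cover is what makes this step available.

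The composite embedding $\cA\lhook\joinrel\to X\times M_n$ gives a $\bU$-map $\varphi\colon X\to\mathrm{Sub}(M_n)$, where $\mathrm{Sub}(M_n)$ is the space of unital (or arbitrary) $C^*$-subalgebras of $M_n$, sending $x$ to the image of $\cA_x$. By construction $\cA\cong\varphi^*\cT$, where $\cT$ is the tautological bundle. Local triviality of $\cA$ forces $\varphi$ to take values in a union of finitely many $U(n)$-orbits of subalgebras with fixed isomorphism type and multiplicities, one for each clopen piece of $X$ where the fiber type is constant. Each such orbit $U(n)/\mathrm{Stab}$ is a compact homogeneous, hence smooth, manifold, and it is $\bU$-invariant, with the $\bU$-action smooth because $\bU$ acts on $M_n$ through a representation. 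The restriction of $\cT$ to it is locally trivial.

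The main obstacle I expect lies here. Finitely many fiber types occur since $\cA$ is subhomogeneous, and multiplicities in $M_n$ are bounded. Still, continuity of $\varphi$ into the correct orbit must be checked. The natural approach is that nearby subalgebras of $M_n$ of the same type are unitarily conjugate by unitaries close to $1$, which should be a Johnson-type perturbation result of the kind quoted in the proof of \Cref{pr:loc.homog.loc.triv}. This also makes $\cT$ restricted to these orbits locally trivial, so their disjoint union is the desired compact smooth $\bU$-manifold.
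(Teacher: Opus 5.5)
Your proposal is correct and is essentially the paper's proof. The implications (a)$\Rightarrow$(b)$\Rightarrow$(c)$\Rightarrow$(d)$\Rightarrow$(e) are argued the same way. For (e)$\Rightarrow$(a) the paper likewise combines the GNS embedding $\cA\lhook\joinrel\to\cE nd\,\cA$ with \cite[Proposition I.9.7]{td_transf-gp}, exhibiting $\cA$ as a pullback of the tautological bundle over the Grassmannian of $C^*$-subalgebras. The only difference is that the paper applies \cite[Proposition I.9.7]{td_transf-gp} directly to $\cA$ as a finite-type vector bundle$_{\bU}$, so that $\cA_x\le V$ and $\cA_x$ sits in $\End V$ as a non-unital corner subalgebra, instead of passing through a separate multiplicative embedding of $\cB$.

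The obstacle you flag is harmless. $\varphi$ is continuous into the ordinary Grassmannian. The finitely many $U(n)$-orbits of $C^*$-subalgebras are compact and pairwise disjoint, hence clopen in their union. So continuity into each orbit, and the manifold structure of the target (which the paper simply asserts), need no perturbation theorem.
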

\begin{proof}
  \Cref{item:th:ft.ext.cpct:some.mfld} $\Rightarrow$ \Cref{item:th:ft.ext.cpct:some.cpct} and \Cref{item:th:ft.ext.cpct:beta} $\Rightarrow$ \Cref{item:th:ft.ext.cpct:some.cpctf} are purely formal, for \Cref{item:th:ft.ext.cpct:some.cpct} $\Rightarrow$ \Cref{item:th:ft.ext.cpct:beta} pull back along the right-hand map in the diagram
  \begin{equation*}
    \begin{tikzpicture}[>=stealth,auto,baseline=(current  bounding  box.center)]
      \path[anchor=base] 
      (0,0) node (l) {$X$}
      +(2,.5) node (u) {$\beta_{\bU}X$}
      +(4,0) node (r) {$Y$}
      ;
      \draw[right hook->] (l) to[bend left=6] node[pos=.5,auto] {$\scriptstyle $} (u);
      \draw[->] (u) to[bend left=6] node[pos=.5,auto] {$\scriptstyle $} (r);
      \draw[->] (l) to[bend right=6] node[pos=.5,auto,swap] {$\scriptstyle $} (r);
    \end{tikzpicture}
  \end{equation*}
  and \Cref{item:th:ft.ext.cpct:some.cpctf} $\Rightarrow$ \Cref{item:th:ft.ext.cpct:ft} by restricting a finite trivializing cover from $Y$ to $X$. It is \Cref{item:th:ft.ext.cpct:ft} $\Rightarrow$ \Cref{item:th:ft.ext.cpct:some.mfld}, then, that carries the payload and will occupy the rest of the proof. 

  Note in first instance that we again have available the embedding \Cref{eq:a2enda}, realizing $\cA$ as a $C^*$ subbundle$_{\bU}$ of $\cE nd(\cA)$ for $\cA$ in the latter expression regarded as a finite-type vector bundle$_{\bU}$. Precisely as in \cite[Proposition I.9.7]{td_transf-gp}, the latter finite-type assumption provides
  \begin{equation*}
    \left(\cA\xrightarrowdbl{\quad}X\right)
    \lhook\joinrel\xrightarrow[\quad]{\quad\text{Hilbert bundle$_{\bU}$ embedding}\quad}
    X\times V
  \end{equation*}
  for a unitary $\bU$-representation $V$, whence an identification of each fiber $\cA_x$ with a subspace of the fixed $V$. This gives a map
  \begin{equation*}
    X\ni x
    \xmapsto{\quad}
    \cA_x
    \in
    \bG_{C^*}(\End V)
    :=
    \text{non-unital Grassmannian of $C^*$-subalgebras}
  \end{equation*}
  realizing $\cA$ as a pullback along it. The target being a $\bU$-manifold, this is the desired conclusion. 
\end{proof}

%%%%%%%%%%%%%%%%%%%%%%%%%%%%%%%% 
%%%%%%%%%%%%%%%%%%%%%%%%%%%%%%%%

\addcontentsline{toc}{section}{References}
%\bibliography{bib}{}

\begin{thebibliography}{10}

\bibitem{blk_oa}
B.~Blackadar.
\newblock {\em Operator algebras}, volume 122 of {\em Encyclopaedia of
  Mathematical Sciences}.
\newblock Springer-Verlag, Berlin, 2006.
\newblock Theory of $C^*$-algebras and von Neumann algebras, Operator Algebras
  and Non-commutative Geometry, III.

\bibitem{MR1461207}
Etienne Blanchard.
\newblock Subtriviality of continuous fields of nuclear {$C^*$}-algebras.
\newblock {\em J. Reine Angew. Math.}, 489:133--149, 1997.

\bibitem{bg_cx-exp}
Etienne Blanchard and Ilja Gogi\'{c}.
\newblock On unital {$C(X)$}-algebras and {$C(X)$}-valued conditional
  expectations of finite index.
\newblock {\em Linear Multilinear Algebra}, 64(12):2406--2418, 2016.

\bibitem{MR2120237}
Etienne Blanchard and Eberhard Kirchberg.
\newblock Global {G}limm halving for {$C^*$}-bundles.
\newblock {\em J. Operator Theory}, 52(2):385--420, 2004.

\bibitem{bourb_top_fr_5-10}
N.~Bourbaki.
\newblock {\em \'{E}l\'{e}ments de math\'{e}matique. {T}opologie
  g\'{e}n\'{e}rale. {C}hapitres 5 \`a 10}.
\newblock Hermann, Paris, 1974.

\bibitem{btd_lie_1995}
Theodor Br{\"o}cker and Tammo tom Dieck.
\newblock {\em Representations of compact {Lie} groups. {Corrected} reprint of
  the 1985 orig}, volume~98 of {\em Grad. Texts Math.}
\newblock New York, NY: Springer, corrected reprint of the 1985 orig. edition,
  1995.

\bibitem{bo_cast_2008}
Nathanial~P. Brown and Narutaka Ozawa.
\newblock {\em {$C^*$}-algebras and finite-dimensional approximations},
  volume~88 of {\em Graduate Studies in Mathematics}.
\newblock American Mathematical Society, Providence, RI, 2008.

\bibitem{bbi}
Dmitri Burago, Yuri Burago, and Sergei Ivanov.
\newblock {\em A course in metric geometry}, volume~33 of {\em Graduate Studies
  in Mathematics}.
\newblock American Mathematical Society, Providence, RI, 2001.

\bibitem{2506.09642v1}
Alexandru Chirvasitu.
\newblock Pervasive ellipticity in locally compact groups, 2025.
\newblock \url{http://arxiv.org/abs/2506.09642v1}.

\bibitem{MR5060733}
Alexandru Chirvasitu.
\newblock Equivariant {B}anach-bundle germs.
\newblock {\em Topology Appl.}, 385:Paper No. 109821, 2026.

\bibitem{CHIRVASITU2026130746}
Alexandru Chirvasitu.
\newblock Non-commutative branched covers and bundle unitarizability.
\newblock {\em Journal of Mathematical Analysis and Applications}, page 130746,
  2026.

\bibitem{dvr_puc_1977}
Jan de~Vries.
\newblock Equivariant embeddings of {$G$}-spaces.
\newblock In {\em General topology and its relations to modern analysis and
  algebra, {IV} ({P}roc. {F}ourth {P}rague {T}opological {S}ympos., {P}rague,
  1976), {P}art {B}}, pages 485--493, 1977.

\bibitem{dvr-ex}
Jan de~Vries.
\newblock On the existence of {$G$}-compactifications.
\newblock {\em Bull. Acad. Polon. Sci. S\'{e}r. Sci. Math. Astronom. Phys.},
  26(3):275--280, 1978.

\bibitem{dg_ban-bdl}
M.~J. Dupr{\'e} and R.~M. Gillette.
\newblock {\em Banach bundles, {Banach} modules and automorphisms of
  {{\(C^*\)}}-algebras}, volume~92 of {\em Res. Notes Math., San Franc.}
\newblock Pitman Publishing, London, 1983.

\bibitem{fell_struct}
J.~M.~G. Fell.
\newblock The structure of algebras of operator fields.
\newblock {\em Acta Math.}, 106:233--280, 1961.

\bibitem{fell_ind-ban-bdl}
J.~M.~G. Fell.
\newblock {\em Induced representations and {Banach} {{\(^*\)}}-algebraic
  bundles. {With} an appendix due to {A}. {Douady} and {L}. {Dal}
  {Soglio}-{Herault}}, volume 582 of {\em Lect. Notes Math.}
\newblock Springer, Cham, 1977.

\bibitem{fd_bdl-1}
J.~M.~G. Fell and R.~S. Doran.
\newblock {\em Representations of *-algebras, locally compact groups, and
  {Banach} *- algebraic bundles. {Vol}. 1: {Basic} representation theory of
  groups and algebras}, volume 125 of {\em Pure Appl. Math., Academic Press}.
\newblock Boston, MA etc.: Academic Press, Inc., 1988.

\bibitem{fk_fin-ind}
Michael Frank and Eberhard Kirchberg.
\newblock On conditional expectations of finite index.
\newblock {\em J. Oper. Theory}, 40(1):87--111, 1998.

\bibitem{gog_top-fg}
Ilja Gogi{\'c}.
\newblock Topologically finitely generated {Hilbert} {{\(C(X)\)}}-modules.
\newblock {\em J. Math. Anal. Appl.}, 395(2):559--568, 2012.

\bibitem{hatch_at}
Allen Hatcher.
\newblock {\em Algebraic topology}.
\newblock Cambridge: Cambridge University Press, 2002.

\bibitem{zbMATH00540631}
Karl~H. Hofmann and Christian Terp.
\newblock Compact subgroups of {Lie} groups and locally compact groups.
\newblock {\em Proc. Am. Math. Soc.}, 120(2):623--634, 1994.

\bibitem{hk_shv-bdl}
Karl~Heinrich Hofmann and Klaus Keimel.
\newblock Sheaf theoretical concepts in analysis: {Bundles} and sheaves of
  {Banach} spaces, {Banach} {C}({X})-modules.
\newblock Applications of sheaves, {Proc}. {Res}. {Symp}., {Durham} 1977,
  {Lect}. {Notes} {Math}. 753, 415-441 (1979)., 1979.

\bibitem{hjjm_bdle}
D.~Husem\"{o}ller, M.~Joachim, B.~Jur\v{c}o, and M.~Schottenloher.
\newblock {\em Basic bundle theory and {$K$}-cohomology invariants}, volume 726
  of {\em Lecture Notes in Physics}.
\newblock Springer, Berlin, 2008.
\newblock With contributions by Siegfried Echterhoff, Stefan Fredenhagen and
  Bernhard Kr\"{o}tz.

\bibitem{hus_fib}
Dale Husemoller.
\newblock {\em Fibre bundles}, volume~20 of {\em Graduate Texts in
  Mathematics}.
\newblock Springer-Verlag, New York, third edition, 1994.

\bibitem{iw}
Kenkichi Iwasawa.
\newblock On some types of topological groups.
\newblock {\em Ann. of Math. (2)}, 50:507--558, 1949.

\bibitem{john_approx}
Barry~Edward Johnson.
\newblock Approximately multiplicative maps between {Banach} algebras.
\newblock {\em J. Lond. Math. Soc., II. Ser.}, 37(2):294--316, 1988.

\bibitem{MR650393}
R.~K. Lashof.
\newblock Equivariant bundles.
\newblock {\em Illinois J. Math.}, 26(2):257--271, 1982.

\bibitem{MR885537}
R.~K. Lashof and J.~P. May.
\newblock Generalized equivariant bundles.
\newblock {\em Bull. Soc. Math. Belg. S\'er. A}, 38:265--271, 1986.

\bibitem{MR3331607}
Wolfgang L\"uck and Bernardo Uribe.
\newblock Equivariant principal bundles and their classifying spaces.
\newblock {\em Algebr. Geom. Topol.}, 14(4):1925--1995, 2014.

\bibitem{megr_max-equiv-cpct}
Michael Megrelishvili.
\newblock Maximal equivariant compactifications.
\newblock {\em Topology Appl.}, 329:21, 2023.
\newblock Id/No 108372.

\bibitem{mnk}
James~R. Munkres.
\newblock {\em Topology}.
\newblock Prentice Hall, Inc., Upper Saddle River, NJ, 2000.
\newblock Second edition of [ MR0464128].

\bibitem{pt_brnch}
A.~A. Pavlov and E.~V. Troitskii.
\newblock Quantization of branched coverings.
\newblock {\em Russ. J. Math. Phys.}, 18(3):338--352, 2011.

\bibitem{zbMATH05172034}
N.~Christopher Phillips.
\newblock Recursive subhomogeneous algebras.
\newblock {\em Trans. Am. Math. Soc.}, 359(10):4595--4623, 2007.

\bibitem{schn_cvx_2e_2014}
Rolf Schneider.
\newblock {\em Convex bodies: the {Brunn}-{Minkowski} theory}, volume 151 of
  {\em Encycl. Math. Appl.}
\newblock Cambridge: Cambridge University Press, 2nd expanded ed. edition,
  2014.

\bibitem{ss_countertop}
Lynn~Arthur Steen and J.~Arthur Seebach, Jr.
\newblock {\em Counterexamples in topology}.
\newblock Dover Publications, Inc., Mineola, NY, 1995.
\newblock Reprint of the second (1978) edition.

\bibitem{steen_fib}
Norman Steenrod.
\newblock {\em The topology of fibre bundles}.
\newblock Princeton Landmarks in Mathematics. Princeton University Press,
  Princeton, NJ, 1999.
\newblock Reprint of the 1957 edition, Princeton Paperbacks.

\bibitem{zbMATH03179258}
R.~G. Swan.
\newblock Vector bundles and projective modules.
\newblock {\em Trans. Am. Math. Soc.}, 105:264--277, 1962.

\bibitem{td_transf-gp}
Tammo tom Dieck.
\newblock {\em Transformation groups}, volume~8 of {\em De Gruyter Stud. Math.}
\newblock De Gruyter, Berlin, 1987.

\bibitem{td_alg-top}
Tammo tom Dieck.
\newblock {\em Algebraic topology}.
\newblock EMS Textb. Math. Z{\"u}rich: European Mathematical Society (EMS),
  2008.

\bibitem{MR216463}
Chien Wenjen.
\newblock Concerning paracompact spaces.
\newblock {\em Proc. Japan Acad.}, 43:121--124, 1967.

\bibitem{wil_top}
Stephen Willard.
\newblock {\em General topology}.
\newblock Dover Publications, Inc., Mineola, NY, 2004.
\newblock Reprint of the 1970 original [Addison-Wesley, Reading, MA;
  MR0264581].

\end{thebibliography}
%\bibliographystyle{plain}

% BEGIN INSERTED BBL (bundle-embeddings-extensions-xv1.bbl)
\def\polhk#1{\setbox0=\hbox{#1}{\ooalign{\hidewidth
  \lower1.5ex\hbox{`}\hidewidth\crcr\unhbox0}}}
  \def\polhk#1{\setbox0=\hbox{#1}{\ooalign{\hidewidth
  \lower1.5ex\hbox{`}\hidewidth\crcr\unhbox0}}}
  \def\polhk#1{\setbox0=\hbox{#1}{\ooalign{\hidewidth
  \lower1.5ex\hbox{`}\hidewidth\crcr\unhbox0}}}
  \def\polhk#1{\setbox0=\hbox{#1}{\ooalign{\hidewidth
  \lower1.5ex\hbox{`}\hidewidth\crcr\unhbox0}}}
  \def\polhk#1{\setbox0=\hbox{#1}{\ooalign{\hidewidth
  \lower1.5ex\hbox{`}\hidewidth\crcr\unhbox0}}}
  \def\polhk#1{\setbox0=\hbox{#1}{\ooalign{\hidewidth
  \lower1.5ex\hbox{`}\hidewidth\crcr\unhbox0}}}

% END INSERTED BBL

\Addresses

\end{document}